\documentclass[journal,12pt,onecolumn,draftclsnofoot,]{IEEEtran}
\usepackage{graphicx} 
\usepackage{amsmath}
\usepackage{outlines}
\usepackage{amssymb}
\usepackage{natbib}
\usepackage{mathtools}
\usepackage[symbol]{footmisc}    
\usepackage{amsthm}   
\usepackage{verbatim} 
\usepackage{subfigure}  
\usepackage[dvipsnames]{xcolor}
\usepackage{hyperref}
\usepackage{enumitem}
\usepackage{tkz-euclide}
\usetikzlibrary{decorations.pathmorphing}
\usetikzlibrary {arrows.meta}

\renewcommand{\thesubfigure}{Fig. \thefigure.\arabic{subfigure}}
\makeatletter
\renewcommand{\p@subfigure}{}
\renewcommand{\@thesubfigure}{\thesubfigure\hskip\subfiglabelskip}
\makeatother

\hypersetup{
    colorlinks=true,
    linkcolor=blue,
    filecolor=magenta,      
    urlcolor=blue,
    citecolor={blue}
}
\newtheorem{theorem}{Theorem}[section]
\newtheorem{obs}{Observation}[section]
\newtheorem{lemma}[theorem]{Lemma}
\newtheorem{defi}{Definition}[section]

\newtheorem{scheme}[theorem]{Broadcast Scheme}

\DeclarePairedDelimiter{\ceil}{\lceil}{\rceil}      
\DeclarePairedDelimiter{\floor}{\lfloor}{\rfloor}

\begin{document}
\title{Series-Parallel and Planar Graphs for Efficient Broadcasting}

\author{\IEEEauthorblockN{David Evangelista,
Hovhannes A. Harutyunyan, and Aram Khanlari\\}
\IEEEauthorblockA{Department of Computer Science and Software Engineering\\
Concordia University\\
Montreal, Quebec, Canada}
}
\date{November 2023}

\maketitle
\setlist[description]{listparindent=0pt, leftmargin=0pt}

\section*{Abstract}

The broadcasting problem concerns the efficient dissemination of information in graphs. In classical broadcasting, a single originator vertex initially has a message to be transmitted to all vertices. Every vertex which has received the message informs at most one uninformed neighbor at each discrete time unit.

In this paper, we introduce infinite families of series-parallel graphs with efficient broadcast times: graphs on $n$ vertices with broadcast time at most $\ceil{\log_2 n } + 1$ for any $n$, graphs on $n$ vertices with broadcast time $\floor{\frac{3 \ceil{\log_2 n}}{2}}$ and maximum degree $\ceil{\log_2 n}-1$ for any $n$, and broadcast graphs on up to $2^{k-1} + 2^{\floor{\frac{k}{2}}}$ vertices with broadcast time $k$ for any $k$. We also introduce an infinite family of planar broadcast graphs on up to $2^{k-1} + 2^{\floor{\frac{3k}{4}} - 1}$ vertices with broadcast time $k$ for any $k$, which improves the known lower bound on the maximum number of vertices in a planar broadcast graph.

\section{Introduction}

\textbf{Broadcasting.} \quad
Broadcasting refers to the transmission of a message in a connected graph from a designated vertex called the \textit{originator}, to all other vertices over a span of discrete time units. At each time unit, each vertex that has already been informed with the message transmits it to at most one uninformed neighbor.
A sequence of calls transmitting the message from an originator $v$ to all other vertices is called a \textit{broadcast scheme} for vertex $v$. The broadcast time of vertex $v$ in graph $G$, denoted $b(v, G)$, is the minimum number of time units across all broadcast schemes for $v$. The broadcast time of $G$, denoted $b(G)$, is the minimum number of time units sufficient to complete broadcasting in $G$ from any originator, i.e. $b(G) = \underset{v\in G}{\max}$ $b(v, G)$. At every time unit, the number of informed vertices increases at least by one and at most doubles, and therefore the broadcast time of any graph $G$ on $n$ vertices is bounded by $\ceil{\log n} \leq b(G) \leq n-1$ (note that throughout this paper, $ \log n$ refers to $\log_2 n$). A graph $G$ on $n$ vertices is a \textit{broadcast graph} if $b(G)=\ceil{\log n}$.

The broadcasting problem has applications in network design where the topology of a network should be optimized for efficient communication. The study of broadcasting is typically divided into two categories, the first of which consists in finding the broadcast times of given graphs. This is generally difficult, as finding the broadcast time of an arbitrary graph is NP-Hard \citep{lewis1983computers,Slater} even when restricted to graph classes such as 3-regular planar graphs \citep{middendorf1993minimum}. We focus on the second category, which consists in constructing families of graphs with structure that guarantees fast broadcast time. For general classes of graphs, many topologies that achieve efficient broadcasting are known, for example hypercubes, Knödel graphs, and other graphs (see \cite{HARUTYUNYAN202356} and the references within). We refer the reader to the following survey papers on broadcasting \citep*{FRAIGNIAUD199479, harutyunyan2013broadcasting, hedetniemi1988survey, hromkovivc1996dissemination}. We restrict our search to series-parallel and planar graphs with efficient broadcast times.

\textbf{Planar and Series-Parallel Graphs.} \quad
Planar graphs are graphs that can be represented on a plane without intersecting edges. Such a representation is called a \textit{planar embedding} of a graph. Series-parallel (SP) graphs form a subclass of planar graphs and consist of multigraphs that can be derived using a recursive sequence of operations: \textit{series extensions} and/or \textit{parallel extensions} \cite{duffin}. A \textit{series extension} adds a new vertex between two adjacent vertices and a \textit{parallel extension} adds a new edge between two adjacent vertices (see Figure \hyperref[fig1.1]{1.1}). The first operation of the construction sequence is applied on a single edge between two vertices that are called \textit{terminals}, denoted $s$ and $t$. Any SP graph (except $K_2$) can alternatively be defined as the result of a \textit{series composition} or \textit{parallel composition} of two SP graphs. Consider SP graphs $G_1$ on terminals $s_1$ and $t_1$ and $G_2$ on terminals $s_2$ and $t_2$. A \textit{series composition} of $G_1$ and $G_2$ consists in merging $s_1$ with $t_2$ or $s_2$ with $t_1$. A \textit{parallel composition} of $G_1$ and $G_2$ consists in merging $s_1$ with $s_2$ and $t_1$ with $t_2$ (see Figure \hyperref[fig1.2]{1.2}). In either case, the operation yields a new SP graph. Note that having multiple edges between pairs of vertices does not impact broadcasting, and thus throughout this paper we only consider simple graphs, i.e. SP graphs that have no multiple edges at the end of their recursive construction.

\begin{figure}[!ht]
    \centering
    \subfigure[\label{fig1.1}A sequence of two SP operations: a parallel extension followed by a series extension]{

        \resizebox{0.4\textwidth}{!}{
            \begin{tikzpicture}[ auto ,node distance =1.7 cm and 2.3cm,, thin, state/.style ={circle ,top color =white , bottom color = white , draw, black, minimum width =0.3 cm}]
        \tikzset{edge/.style = {->,> = latex'}}

            \tkzDefPoint(-1.5,1){A}
            \tkzDefPoint(-1.5,-1){B}
            \tkzDefPoint(-0.5,0){C}
            \tkzDefPoint(1,0){D}
            \tkzDefPoint(2,1.5){E}
            \tkzDefPoint(3,0){F}
            \tkzDefPoint(2,-1.5){G}

            \node[state] (1) at (A)[minimum size=0.2cm] {$s$};
            \node[state] (2) at (B)[minimum size=0.2cm] {$t$};

            \node[state] (5) at (E)[minimum size=0.2cm] {$s$};
            \node[state] (7) at (G)[minimum size=0.2cm] {$t$};

            \draw (1) -- (2);
            \draw[edge] (C) -- (D);

            \draw (5) to [out=-70,in=70] (7);
            \draw (5) to [out=250,in=110] (7);

            \tkzDefPoint(6,1.5){A}
            \tkzDefPoint(6,-1.5){B}
            \tkzDefPoint(3.5,0){C}
            \tkzDefPoint(5,0){D}
            \tkzDefPoint(7.4,0){E}
            \tkzDefPoint(11,0){F}
            \tkzDefPoint(10,-1){G}

            \node[state] (1) at (A)[minimum size=0.2cm] {$s$};
            \node[state] (2) at (B)[minimum size=0.2cm] {$t$};

            \node[state] (5) at (E)[minimum size=0.6cm] { };

            \draw (1)--(2);
            \draw (1)--(5);
            \draw[edge] (C) -- (D);
            \draw (5) -- (2);
            
        \end{tikzpicture}
        }
        }
        %
       \quad
       \subfigure[\label{fig1.2}Two SP graphs and their parallel composition resulting in a new SP graph]{
       \subfigure{
             \resizebox{0.14\textwidth}{!}{
        \begin {tikzpicture}[ auto ,node distance =1.7 cm and 2.3cm,, thin, state/.style ={circle ,top color =white , bottom color = white , draw, black, minimum width =0.3 cm}]
        \tikzset{edge/.style = {->,> = latex'}}

        \tikzset{edge/.style = {->,> = latex'}}

        \foreach \i\j in {0/2,0/3,1/3,1/4,3/3,3/4}
            \node[state] (\i\j) at (\i,\j)[minimum size=0.2cm]{};
        \node[state] (a) at (4,4)[minimum size=0.2cm]{};

        \node[state] (s1) at (2,5)[minimum size=0.2cm]{$s_1$};
        
        \draw (02) -- (03);
        \draw (14) -- (03);
        \draw (14) -- (13);
        \draw (14) -- (s1);
        \draw (34) -- (s1);
        \draw (34) -- (33);
        \draw (a) -- (s1);

        \node[state] (t) at (2,0.8)[minimum size=0.2cm]{$t_1$};
        
        \foreach \i in {02,03,13,14,33,34,s1,a}
            \draw (t) -- (\i);
         
        \tikzset{edge/.style = {->,> = latex'}}       
        \end{tikzpicture}
        }
        }
         \subfigure{
        \resizebox{0.058\textwidth}{!}{
        \begin {tikzpicture}[ auto ,node distance =1.7 cm and 2.3cm,, thin, state/.style ={circle ,top color =white , bottom color = white , draw, black, minimum width =0.3 cm}]
        \tikzset{edge/.style = {->,> = latex'}}

        \tikzset{edge/.style = {->,> = latex'}}

        \foreach \i\j in {1/3,1/4}
            \node[state] (\i\j) at (\i,\j)[minimum size=0.2cm]{};
        \node[state] (a) at (1.5,4)[minimum size=0.2cm]{};

        \node[state] (s1) at (2,5)[minimum size=0.2cm]{$s_2$};
        
        \draw (14) -- (13);
        \draw (14) -- (s1);
        \draw (a) -- (s1);

        \node[state] (t) at (2,0.8)[minimum size=0.2cm]{$t_2$};
        
        \foreach \i in {13,14,s1,a}
            \draw (t) -- (\i);
         
        \tikzset{edge/.style = {->,> = latex'}}       
        \end{tikzpicture}
        } 
        }
        %
    ~
         \subfigure{
     \resizebox{0.18\textwidth}{!}{
        \begin {tikzpicture}[ auto ,node distance =1.7 cm and 2.3cm,, thin, state/.style ={circle ,top color =white , bottom color = white , draw, black, minimum width =0.3 cm}]
        \tikzset{edge/.style = {->,> = latex'}}
        

        \tikzset{edge/.style = {->,> = latex'}}

        \foreach \i\j in {0/2,0/3,1/3,1/4,2/3,2/4,4/4, 4/3, 5/4}
            \node[state] (\i\j) at (\i,\j)[minimum size=0.2cm]{};
        \node[state] (a) at (2.5,4)[minimum size=0.2cm]{};
        \node[state] (35) at (3,5)[minimum size=0.2cm]{$s$};
        
        \draw (02) -- (03);
        \draw (14) -- (03);
        \draw (14) -- (13);
        \draw (14) -- (35);
        \draw (24) -- (35);
        \draw (24) -- (23);
        \draw (a) -- (35);
        \draw (44) -- (35);
        \draw (54) -- (35);
        \draw (44) -- (43);

        \node[state] (t) at (3,0.8)[minimum size=0.2cm]{$t$};
        
        \foreach \i in {02,03,13,14,23,24,35, 43,a,44, 54}
            \draw (t) -- (\i);
        \tikzset{edge/.style = {->,> = latex'}}
        
        
        \end{tikzpicture}
        }
        }
        \setcounter{subfigure}{2}
        }
    \label{sp_ops_alt_def}
    \setcounter{figure}{1}
\end{figure}

Properties of SP and planar graphs have been leveraged for algorithmic problems. Some decision problems that are NP-complete for arbitrary graphs can be solved in linear time for SP graphs, including maximum matching, maximum independent set, and minimum dominating set. The recognition, construction, and decomposition of SP graphs can also be done in linear time \citep{eppstein}. Planar graphs are also known to have certain structural properties that can be exploited for various algorithmic problems. We refer the reader to \citep{planarbook} for examples.

\textbf{Known Results on Broadcasting in Series-Parallel and Planar Graphs} \quad

The problems of determining broadcast times or optimal broadcast schemes are NP-complete in SP graphs since their treewidth is at most two \citep*{tale2024} as well as in planar graphs \citep{jakoby1998complexity}. Kortsarz and Peleg \cite{kortsarz1995approximation} presented a $\mathcal{O}(\log{}n)$-approximation algorithm for broadcasting in graph families with small separators, such as SP graphs. Jakoby et. al \citep{jakoby1998complexity} showed that the problem can be solved efficiently for graphs with decomposable properties. Hell and Seyffarth \cite{HellPaper} provided a family of planar broadcast graphs on up to $2^{k-1} + 2^{\floor{\frac{2k}{3}}} + 1$ vertices for any $k$, and thus established a lower bound on the maximum number of vertices in planar broadcast graphs. They also showed a trivial upper bound on the maximum number of vertices in a planar broadcast graph of at most $1+ 2^{k-1} + 2^{k-2} + 2^{k-3} + 2^{k-4} + 2^{k-5} = 2^k - 2^{k-5} + 1$ by virtue of containing a vertex of degree at most 5.

\textbf{Structure of the Paper} \quad
This paper is structured as follows. In Section 2, we examine general bounds for broadcast times of SP graphs. In Section 3, we introduce two families of SP graphs on $n$ vertices for any $n$ with logarithmic broadcast time: the first with maximum degree $\ceil{\log n} - 1$ and broadcast time $\floor{\frac{3 \ceil{\log n}}{2}}$, and the second with broadcast time $\ceil{\log n} + 1$. In Section 4, we introduce a family of SP broadcast graphs on up to $2^{k-1} + 2^{\floor{\frac{k}{2}}}$ vertices, for any $k$. In Section 5, we introduce a family of planar broadcast graphs on up to $2^{k-1} + 2^{\floor{\frac{3k}{4}}-1}$ vertices with broadcast time $k$, for any $k$, thereby improving the lower bound in \cite{HellPaper}.

\section{Broadcasting in Series-Parallel Graphs}

We first establish general bounds for the broadcast times of SP graphs. A path on $n$ vertices is an SP graph and has broadcast time $n-1$, giving a tight upper bound on the broadcast time $b(G) \leq n-1$ for any SP graph $G$. This upper bound can be improved for certain subclasses of SP graphs, such as graphs that can be generated from $K_3$. We establish the following property which, to the best of our knowledge, has not been studied in the literature.

\begin{lemma}
If $G$ is an SP graph that can be generated from $K_3$ using a sequence of SP operations, then $G$ is $2$-connected.
\end{lemma}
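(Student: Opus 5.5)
We argue by induction on the number of SP operations applied after $K_3$. The base case is immediate: $K_3$ is a cycle, and deleting any one vertex leaves a path on two vertices, which is connected. For the inductive step we assume $G$ is $2$-connected and show that one further series or parallel extension produces a $2$-connected graph $G'$. Throughout we use the characterisation that a graph on at least three vertices is $2$-connected if and only if it is connected and removing any single vertex leaves it connected.

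\emph{Parallel extension.} This adds an edge parallel to an existing edge $uv$. The vertex set does not change, and the edge set only grows (as a multigraph; the underlying simple graph is unchanged). Deleting any vertex $x$ from $G'$ therefore gives a supergraph of $G - x$. Since $G-x$ is connected by hypothesis, so is $G'-x$.

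\emph{Series extension.} This subdivides an edge $uv$ by a new vertex $w$, so $w$ replaces the edge $uv$ by the path $u\,w\,v$. Take a vertex $x$ of $G'$ and delete it; there are three cases.
\begin{enumerate}
\item $x=w$. Then $G'-w$ is $G$ with the edge $uv$ deleted, or with one copy of it deleted if $uv$ was a multi-edge. A $2$-connected graph with at least three vertices has no bridge, because every edge lies on a cycle. Hence deleting one edge keeps the graph connected.
\item $x=u$ (the case $x=v$ is symmetric). Then $G'-u$ is obtained from $G-u$, which is connected, by attaching $w$ as a pendant vertex adjacent to $v$. It is therefore connected.
\item $x\notin\{u,v,w\}$. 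Any path in $G-x$ that uses the edge $uv$ can be rerouted through $u\,w\,v$. Moreover $w$ is adjacent to $u$, which lies in $G-x$. So $G'-x$ is connected.
\end{enumerate}
Also $G'$ itself is connected and has at least three vertices, so $G'$ is $2$-connected. This completes the induction.

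The main subtlety is the first case of the series extension, which needs the fact that a $2$-connected graph has no bridges. An equivalent way to see it: every edge of $G$ lies on a cycle, and subdividing that edge keeps a cycle through $w$. A further point to handle is multi-edges. Since SP graphs are multigraphs during their construction, we should treat connectivity on the underlying multigraph and note that parallel edges never hurt connectivity. With that convention the argument goes through unchanged.
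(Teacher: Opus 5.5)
Your proof is correct. It has the same skeleton as the paper's: induction on the number of extensions applied to $K_3$, split into a parallel case and a series case. The difference is how you certify $2$-connectivity. You use the vertex-deletion definition, whereas the paper uses the characterization that any two distinct vertices are joined by two internally vertex-disjoint paths.

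This matters only at the new subdivision vertex (your $w$, the paper's $q$ on the edge $\{p,r\}$). The paper must produce two disjoint paths from $w$ to each other vertex $y$: the edge $wu$ followed by a $u$--$y$ path, and the edge $wv$ followed by a $v$--$y$ path. For these to be disjoint, one needs a $u$--$y$ path and a $v$--$y$ path meeting only at $y$. That is the fan lemma, a consequence of Menger's theorem, and the paper asserts it rather than derives it when it refers to ``the path'' avoiding the edge $\{p,r\}$.

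Your case analysis avoids this. Deleting $w$, deleting an endpoint of the subdivided edge, and deleting any other vertex are each checked locally. The only nontrivial input is that a $2$-connected graph on at least three vertices has no bridge. Your argument is therefore more elementary and self-contained, and it also says explicitly how multi-edges arising during the construction are handled. The paper's version is shorter to state but relies on a Menger-type fact for the one step that needs real work.
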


\begin{proof}

    Let $G$ be such a graph. To prove $G$'s $2$-connectivity, it suffices to show that there are two vertex-disjoint paths between any pair of distinct vertices in $G$. The proof is by induction on the number of SP operations in a fixed recursive construction sequence of $G$ applied on $K_3$.
    
    For $K_3$, the statement is true. Assume that after the $i$\textsuperscript{th} SP operation ($i \geq 0)$, the graph is $2$-connected. Consider operation $i+1$. Performing a parallel extension on any edge cannot remove a path between two vertices. On the other hand, performing a series extension on an edge $\{p, r\}$ introduces a new vertex, say $q$. Between any pair of distinct vertices excluding $q$, there still exist two paths, as any path previously containing edge $\{p, r\}$ can now be modified to contain $\{p, q\} \cup \{q, r\}$ instead. For any vertex $v \neq q$, there exist two vertex-disjoint paths between $q$ and $v$: $\{q, p\} \cup P_1$, where $P_1$ is the path between $p$ and $v$ not containing $\{p, r\}$, and $\{q, r\} \cup P_2$, where $P_2$ is the path between $r$ and $v$ not containing $\{r, p\}$.
\end{proof}

Every SP graph $G$ on $n$ vertices which can be generated from $K_3$ is $2$-connected and therefore has broadcast time at most $\ceil{\frac{n}{2}}$ \citep{aramThesis}. Since $G$ could be a cycle with $b(G) = \ceil{\frac{n}{2}}$, the upper bound $b(G) \leq \ceil{\frac{n}{2}}$ is tight for any graph $G$ in this subclass of SP graphs.

\section{Series-Parallel Graphs with Logarithmic Broadcast Time}

Our design of SP and planar graphs for efficient broadcasting makes use of \textit{binomial trees}, a common topology used throughout the literature.

\begin{defi} 
    A $0$-dimensional binomial tree is defined as the graph on one vertex. A \textit{$k$-dimensional binomial tree} $BT_k$ on $2^k$ vertices with $k\geq 1$ can equivalently be seen as any of the following (see Figure \ref{c2:bino}).

    \begin{enumerate}
        \item [a)] A $k$-dimensional root vertex connected to $k$ children that are themselves roots of a $BT_{k - 1},$ $BT_{k - 2},$ $...,$ and $BT_0$. Each $i$\textsuperscript{th} child root is called a \textit{subroot} of dimension $k-i$.

        \item  [b)]   For $d \in \{0, ..., k\}$, a $BT_{k-d}$ where each vertex becomes the root of an appended $BT_d$. This results in $2^{k-d} \cdot 2^d = 2^k$ vertices.
    \end{enumerate}
    \label {binodef}
\end{defi}

The following special cases of Definition \ref{binodef}\hyperref[binodef]{b} will be used throughout this paper.
\begin{itemize}
    \item When $d = 1$, every vertex in a $BT_{k-1}$ gets a new adjacent vertex, resulting in a $BT_k$.
    \item When $d = k-1$, the roots of two $BT_{k-1}$'s become adjacent, with one of them designated as the root of the resulting $BT_k$. Note however that the other root (the other vertex of degree $k$) can also be considered \textit{a} root of the $BT_k$.
\end{itemize}

\begin{obs}
\label{obs3}
As a consequence of Definition \ref{binodef}\hyperref[binodef]{b}, any vertex in a $BT_k$ is contained in some $BT_d$ subtree, for $d \in \{0, ..., k\}$ (see Figure \ref{c2:bino}).
\end{obs}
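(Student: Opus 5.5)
The plan is to read Observation \ref{obs3} directly off the decomposition in Definition \ref{binodef}\hyperref[binodef]{b}. Fix $k$, a vertex $v$ of $BT_k$, and any $d \in \{0,\dots,k\}$. The definition views $BT_k$ as a copy of $BT_{k-d}$, call it the \emph{skeleton}, in which each skeleton vertex $u$ is the root of an appended copy $T_u$ of $BT_d$. Since these copies contain $2^{k-d}\cdot 2^d = 2^k$ vertices in total, they partition $V(BT_k)$. Hence $v$ lies in exactly one $T_u$, and that $T_u$ is the required $BT_d$ subtree containing $v$.

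To make this rigorous I will not use the counting alone. Instead I will justify that the two descriptions in Definition \ref{binodef} agree, by induction on $k$, using the recursive form (a): $BT_k$ consists of two copies of $BT_{k-1}$ whose roots are joined by an edge. This is the case $d=k-1$ singled out in the paper.

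\textbf{Induction.} The base cases are easy. For $d=0$, every vertex is its own $BT_0$. For $d=k$, the whole tree is the $BT_k$. For $0<d<k$, apply the inductive hypothesis to both $BT_{k-1}$ halves with the same $d$. Each half is then a $BT_{k-1-d}$ skeleton whose vertices carry appended $BT_d$'s. Joining the two halves by the edge between their roots does two things. On the skeletons, it joins two $BT_{k-1-d}$ by their roots, which gives a $BT_{k-d}$. On the appended trees, it leaves every $BT_d$ untouched. This reproduces form (b) for $k$, and the partition argument above then finishes the proof.

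\textbf{Main obstacle.} The only delicate step is the inductive one: checking that the root-to-root edge joins the two skeleton roots and not vertices deep inside some appended $BT_d$. This holds because in each half the skeleton root coincides with the root of that $BT_{k-1}$. I would carry that fact along as part of the inductive hypothesis.
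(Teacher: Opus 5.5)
Your proposal is correct and matches the paper's reasoning: the paper simply reads the observation off Definition \ref{binodef}b, because the $2^{k-d}$ appended copies of $BT_d$ are pairwise disjoint and cover $V(BT_k)$, so every vertex lies in one of them. Your induction deriving (b) from the root-joining recursion is extra rigor the paper skips, since it lists (b) as an equivalent form of the definition. One small correction: ``two $BT_{k-1}$'s joined at their roots'' is the $d=k-1$ case of (b), not form (a), although it follows at once from (a) by detaching the subtree at the subroot of dimension $k-1$.
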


Broadcasting in binomial trees is outlined in Broadcast Scheme \ref{binoscheme}.

\begin{figure}[!ht]
     \raggedleft
    \resizebox{8cm}{!}{

    \begin{tikzpicture}[ auto , node distance =1.7 cm and 2.3cm, thin, state/.style ={circle , top color =white , bottom color = white , draw, black, minimum width =0.3 cm}]
    \tikzset{edge/.style = {->,> = latex'}}
        \foreach \i\j in {1/1,1/2,2/2,2/3,3/2,3/3,4/3,3/4,5/2,5/3,6/3,5/4,7/3,7/4,6/5}
            \node[state] (\i\j) at (\i,\j)[minimum size=0.2cm]{};
        \node[state] (84) at (8.5,4)[minimum size=0.2cm]{};
        \foreach \i in {84, 74}
            \draw (65) -- (\i);

            \draw [blue, line width=1.5] (65) -- (54); 
            \draw [blue, line width=1.5] (65) -- (34); 

        \foreach \i in {73}
            \draw (74) -- (\i);
        \foreach \i in {63,53}
            \draw (54) -- (\i);
        \foreach \i in {43,33}
            \draw (34) -- (\i);

            \draw [blue, line width=1.5] (34) -- (23); 
            
        \foreach \i in {22,12}
            \draw (23) -- (\i);
        \foreach \i in {32}
            \draw (33) -- (\i);
        \foreach \i in {52}
            \draw (53) -- (\i);
        \foreach \i in {12}
            \draw (11) -- (\i);
            
        \node (1) at (3.5,1) {$BT_3$};
        \node (2) at (5.5,1) {$BT_2$};
        \node (2) at (7.5,2) {$BT_1$};
        \node (2) at (8.5,3) {$BT_0$};
        \node[blue] (2) at (4.4, 4.9) {$BT_2$};
        
        \draw[dashed, rotate=45] (3.5,0) ellipse (2.5cm and 1.3cm);
        \draw[dashed] (5.4,3) ellipse (1cm and 1.5cm);
        
        \draw[dashed] (7,3.5) ellipse (0.5cm and 1cm);
        \draw[dashed] (8.5,4) ellipse (0.5cm and 0.5cm);
        
    \end{tikzpicture}
    }
     \raggedright
    \resizebox{8cm}{!}{
    \begin{tikzpicture}[ auto , node distance =1.7 cm and 2.3cm, thin, state/.style ={circle , top color =white , bottom color = white , draw, black, minimum width =0.3 cm}]
    \tikzset{edge/.style = {->,> = latex'}}
        \foreach \i\j in {1/1,1/2,2/2,2/3,3/2,3/3,4/3,3/4,5/2,5/3,6/3,5/4,7/3,7/4,6/5}
            \node[state] (\i\j) at (\i,\j)[minimum size=0.2cm]{};
        \node[state] (84) at (8.5,4)[minimum size=0.2cm]{};
        \foreach \i in {84,74,54,34}
            \draw (65) -- (\i);
        \foreach \i in {73}
            \draw (74) -- (\i);
        \foreach \i in {63,53}
            \draw (54) -- (\i);
        \foreach \i in {43,33,23}
            \draw (34) -- (\i);
        \foreach \i in {22,12}
            \draw (23) -- (\i);
        \foreach \i in {32}
            \draw (33) -- (\i);
        \foreach \i in {52}
            \draw (53) -- (\i);
        \foreach \i in {12}
            \draw (11) -- (\i);
        \node (1) at (1.5,4) {$BT_3$};
        \node (2) at (4.75,5) {$BT_3$};
        
        \draw[dashed, rotate=45] (3.5,0) ellipse (2.5cm and 1.2cm);
        \draw[dashed, rotate=45] (7.1, -2.2) ellipse (2.4cm and 1.9cm);
    \end{tikzpicture}
    }
    \caption{A binomial tree $BT_4$ is a root vertex with children $BT_3$, $BT_2$, $BT_1$, $BT_0$. The blue edges show that a $BT_4$ is also a $BT_2$ where each vertex is the root of an appended $BT_2$ (left). A $BT_4$ alternatively consists of two instances of $BT_3$ connected at their roots (right). }
    \label{c2:bino}
\end{figure}
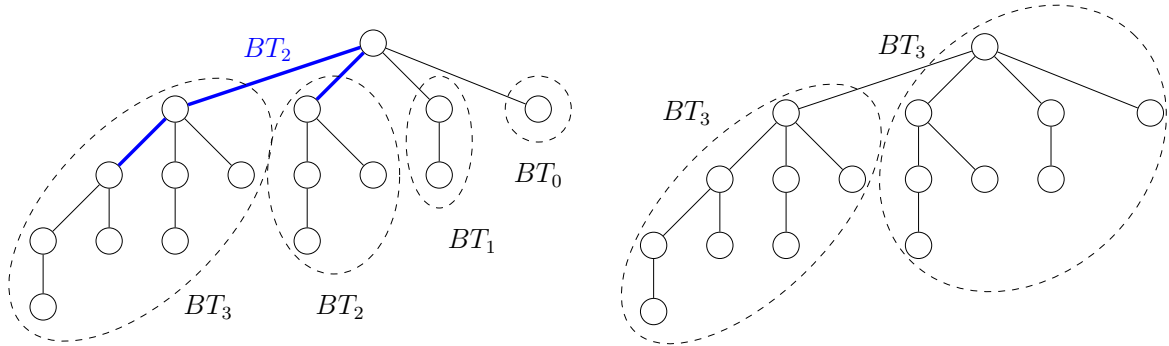

\begin{scheme}
\label{binoscheme}
    The originator informs towards the closest root (a vertex of degree $k$) which gets informed at time unit $i \geq 0$. At every time unit $i+j$ ($j \geq 1)$, every informed vertex informs its child subroot of dimension $k-j$.
\end{scheme}

\begin{obs}
\label{obs_binoroottime}
    For a root $r$ of a $BT_k$, $b(r, BT_k) = k$.
\end{obs}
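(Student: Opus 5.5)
We prove $b(r, BT_k) = k$ by establishing matching lower and upper bounds, with the upper bound by induction on $k$ using Definition \ref{binodef}\hyperref[binodef]{a}.

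\textbf{Lower bound.} Since $BT_k$ has $2^k$ vertices and the number of informed vertices at most doubles at each time unit, any broadcast scheme needs at least $\ceil{\log 2^k} = k$ time units. Hence $b(r, BT_k) \geq k$ for every originator, and in particular for $r$.

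\textbf{Upper bound.} We show that Broadcast Scheme \ref{binoscheme}, started at $r$ (so $i = 0$), informs every vertex by time $k$.

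The claim to prove by induction is: if the root of a $BT_m$ is informed at time $t$ and then informs its subroots in decreasing order of dimension, then every vertex of that $BT_m$ is informed by time $t+m$.

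1. For $m = 0$ the claim is trivial, since the tree is the single informed vertex.
2. For $m \geq 1$, the root calls its subroot of dimension $m-j$ at time $t+j$, for $j = 1, \dots, m$.
3. By the induction hypothesis, the $BT_{m-j}$ hanging from that subroot is fully informed by time $(t+j) + (m-j) = t+m$.
4. Since these subtrees together with the root cover all vertices of the $BT_m$, the whole tree is informed by time $t+m$.

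Applying the claim with $m = k$ and $t = 0$ gives $b(r, BT_k) \leq k$.

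\textbf{Consistency with the scheme.} The scheme has each informed vertex, at time unit $j$, call its child subroot of dimension $k-j$. This should be read relative to each vertex's own subtree dimension: a vertex that is the root of a $BT_m$ and was informed at time $t$ calls its child subroot of dimension $m-(j-t)$ at time $j$. Under this reading the scheme coincides with the recursion above. Checking this reading is the only delicate step.

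Combining the two bounds gives $b(r, BT_k) = k$.
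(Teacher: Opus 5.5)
Your proof is correct and is essentially the paper's: the lower bound is the general $\lceil \log n \rceil$ doubling bound with $n = 2^k$. The upper bound is Broadcast Scheme \ref{binoscheme} run from the root, and your induction over Definition \ref{binodef}a just makes explicit why it finishes by time $k$, which the paper states as an observation without a written proof; the step you flag as delicate is automatic, since a vertex informed at time $j'$ is a subroot of dimension $k-j'$, so calling its child of dimension $k-j$ at time $j$ is exactly your relative rule.
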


\begin{obs}
\label{obs2}
    The broadcast time of a $BT_k$ is $b(BT_k) = 2k-1$. The upper bound follows from the distance from any vertex to a root being at most $k-1$ and Observation \ref{obs_binoroottime}. The lower bound follows from the diameter of the graph being $2k-1$.
\end{obs}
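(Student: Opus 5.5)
The plan is to prove the two inequalities $b(BT_k) \leq 2k-1$ and $b(BT_k) \geq 2k-1$ separately, both by induction on $k$ using the recursive structure in Definition \ref{binodef}. The case $k=0$ is degenerate, since the bound $2k-1$ is meant for $k\geq 1$, so I treat $k \geq 1$ and take $k=1$ ($K_2$, broadcast time $1$) as the base case.

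\textbf{Upper bound.} First I show that every vertex $v$ of $BT_k$ is at distance at most $k-1$ from one of its two roots. By the special case $d=k-1$ of Definition \ref{binodef}\hyperref[binodef]{b}, $BT_k$ consists of two copies of $BT_{k-1}$ joined by an edge between their roots $r_1,r_2$, and both $r_1$ and $r_2$ are roots of $BT_k$. Suppose $v$ lies in the copy rooted at $r_1$. By induction (using the rooted view, in which $r_1$ is the root and depth is at most $k-1$), the distance from $v$ to $r_1$ is at most $k-1$, which is exactly what is needed. A cleaner way to phrase this is that the height of a $BT_k$ from its root is $k$... in fact it is $k-1$ when the tree is viewed from the root. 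This I verify by induction from Definition \ref{binodef}\hyperref[binodef]{a}: the subroots head $BT_{k-1},\dots,BT_0$ of heights at most $k-2$, so the height of $BT_k$ is at most $1+(k-2)=k-1$.

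The broadcast scheme from $v$ then runs in two phases. In the first phase, $v$ forwards the message along the path to the closest root $r$, which becomes informed at some time $i \leq k-1$. Meanwhile, each vertex already on the path, once it has made its forwarding call, sits idle or informs other neighbours. In the second phase, Broadcast Scheme \ref{binoscheme} is followed from $r$. By Observation \ref{obs_binoroottime}, this informs all vertices by time $i+k \leq 2k-1$.

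The one subtlety is to check that the vertices informed during the first phase do not disturb the scheme rooted at $r$. They are already informed, so their calls in the scheme from $r$ can be skipped, and no vertex is ever asked to make two calls in the same time unit. Skipping calls only removes work, so the time bound is unaffected.

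\textbf{Lower bound.} I exhibit two vertices at distance $2k-1$. Let $r_1,r_2$ be the two roots from the $d=k-1$ decomposition, and let $u_j$ be a deepest vertex of the copy rooted at $r_j$, at depth $k-1$ (for example, the end of the path obtained by always descending into the largest subroot). Since the tree path from $u_1$ to $u_2$ must cross the edge $r_1r_2$, we get $d(u_1,u_2) = (k-1)+1+(k-1) = 2k-1$. Any broadcast from $u_1$ needs at least $d(u_1,u_2)$ time units to reach $u_2$, so $b(BT_k) \geq b(u_1,BT_k) \geq 2k-1$.

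The only step needing real care is the height claim (depth exactly $k-1$ from a root) together with the scheme-combination subtlety above; the rest is routine.
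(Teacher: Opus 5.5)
Your overall route is the paper's own. For the upper bound you walk to the nearer of the two degree-$k$ roots (at most $k-1$ steps) and then apply Observation~\ref{obs_binoroottime}; for the lower bound you exhibit two vertices at distance $2k-1$. The problem is the height lemma that you yourself single out as the step needing care. You assert that a $BT_k$ has height $k-1$ from its root, and you ``verify'' this by claiming that the subroots head trees of height at most $k-2$. That is false. The subroot of dimension $k-1$ heads a $BT_{k-1}$, whose height is $k-1$. The induction already fails at the base: $BT_0$ has height $0$, $BT_1=K_2$ has height $1$, and $BT_2$, a path on four vertices, has height $2$ from either root. The same induction on Definition~\ref{binodef}a actually shows that $BT_m$ has height exactly $m$.

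This error is not cosmetic. First, your lemma contradicts your own lower bound, which needs a vertex $u_j$ at depth exactly $k-1$ in the $BT_{k-1}$ copy rooted at $r_j$. Under your lemma that copy would have height $k-2$, and you would only get distance $2k-3$. Second, it hides why the upper bound needs two roots. From a single fixed root some vertices are at distance $k$, and ``go to the root, then broadcast'' costs $2k$ for them.

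The fix is to delete the ``cleaner'' paragraph and use the fact that $BT_{k-1}$ has height $k-1$ inside the $d=k-1$ decomposition. Each vertex is then within $k-1$ of the root of its own half, which is the nearer root of $BT_k$. Descending repeatedly into the largest subroot attains depth exactly $k-1$ in each half. With that correction your argument coincides with the paper's. Your remark that already-informed path vertices simply skip their calls is a correct detail the paper leaves implicit.
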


\subsection{Mirrored-Binomial Series-Parallel Graph}

We now show how to construct an SP graph on $n$ vertices with broadcast time at most $\floor{\frac{3\ceil{\log n}}{2}}$ and maximum degree $\ceil{\log n} - 1$, for any $n$.

A $k$-dimensional \textit{Mirrored-Binomial SP} graph $MB_k$ ($k\geq2$) on $n = 2^k$ vertices is informally described as follows. Consider two $BT_{k-1}$'s, one of them rotated by 180 degrees and positioned below the other. Each leaf in the upper tree is connected to the closest leaf in the lower tree (see Figure \ref{FMB}).

Recall that by Definition \ref{binodef}\hyperref[binodef]{b} with $d = k-2$, each of the $BT_{k-1}$'s can be seen as two $BT_{k-2}$'s, one on the left side and one on the right side, joined at the roots. Observe that for $k \geq 3$, the $BT_{k-2}$'s on the left side of the $MB_k$, in the upper and lower trees, are connected at the leaves, together forming an $MB_{k-1}$ (see to the left of the dashed line in Figure \ref{FMB}). Therefore, an $MB_k$ can alternatively be seen as two side-by-side copies of an $MB_{k-1}$, with their respective $s$ and $t$ terminals connected to each other.

\begin{theorem}
    An $MB_k$ $(k \geq 2)$ on $2^k$ vertices is SP.
\end{theorem}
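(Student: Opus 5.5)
We prove the statement by induction on $k$, using the recursive description of $MB_k$ as two side-by-side copies of $MB_{k-1}$ whose terminals are joined. Throughout, the terminals of $MB_k$ are the roots of the two $BT_{k-1}$'s: $s$ is the root of the upper tree and $t$ the root of the lower tree. The inductive claim is stronger than the theorem: $MB_k$ is an SP graph with terminals $s$ and $t$. We need the terminals fixed, because the inductive step composes copies of $MB_{k-1}$ at their terminals.

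\textbf{Base case $k=2$.} Each $BT_1$ is a single edge. The upper tree is $s$–$a$, where $a$ is the other vertex of degree $1$, and the lower tree is $t$–$b$. The leaves (here both vertices of each tree) are matched to the closest leaf below, giving the edges $\{s,b\}$ and $\{a,t\}$; from the drawing, $a$ sits above $b$ and $s$ above $t$ with the lower tree rotated. This yields the $4$-cycle $s,a,t,b$. It is obtained from the edge $\{s,t\}$ by a parallel extension followed by a series extension on each copy, so it is SP with terminals $s,t$. I will check the exact leaf-matching against Figure \ref{FMB}, but any perfect matching between two disjoint edges yields a $4$-cycle, and $s,t$ are at distance $2$ in it. So the base case holds regardless of the matching.

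\textbf{Inductive step.} Assume $MB_{k-1}$ is SP with terminals $s',t'$. By the remark preceding the theorem, $MB_k$ consists of a left copy $L$ and a right copy $R$ of $MB_{k-1}$, with terminals $s_L,t_L$ and $s_R,t_R$, together with the edges $\{s_L,s_R\}$ and $\{t_L,t_R\}$. One of $s_L,s_R$ is the root $s$ of $MB_k$, and one of $t_L,t_R$ is the root $t$. Say $s=s_L$; the lower tree is mirrored, so $t$ is determined by the picture, and I handle both possibilities below.

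The construction then proceeds as follows:
\begin{enumerate}
\item Series-compose the edge $\{s,s_R\}$ (an SP graph $K_2$) with $R$ at $s_R$. This gives an SP graph with terminals $s$ and $t_R$.
\item Series-compose the result with the edge $\{t_R,t_L\}$ at $t_R$. This gives an SP graph $P$ with terminals $s$ and $t_L$.
\item Parallel-compose $P$ with $L$, whose terminals are $s_L=s$ and $t_L$.
\end{enumerate}
The result is exactly $MB_k$, and it is SP with terminals $s$ and $t_L$.

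\textbf{Main obstacle.} The key step is reconciling these terminals with the induction hypothesis. We need $t_L$ to be the root $t$ of the lower tree, or else we need a way to relocate terminals. I expect the mirroring makes the lower root lie on the side opposite to $s$, so $t=t_R$, which breaks the construction above. In that case I would instead take the terminals of $MB_k$ to be $s$ and $t_L$. Alternatively, I would weaken the inductive claim to ``$MB_k$ is SP with terminals being the two roots $s_L$ and $t_L$ of one side.'' The construction above shows this weakened claim is preserved at each step, since it outputs terminals $s_L,t_L$, where $t_L$ is a root of the lower $BT_{k-1}$ in the sense of Definition \ref{binodef}b with $d=k-2$.

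The theorem only asserts that $MB_k$ is SP, so the precise choice of terminals is immaterial as long as it is consistent through the induction. The remaining verification is that the three operations produce no multiple edges. This holds because $s_L\neq t_L$ and the added edges $\{s_L,s_R\}$ and $\{t_L,t_R\}$ are not already present in $L$ or $R$.
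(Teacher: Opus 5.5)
Your inductive step has a genuine gap. It is exactly the terminal problem you flagged, and your proposed repair does not resolve it. In the recursive description, the two copies of $MB_{k-1}$ are joined at their own roots, and because of the mirroring these roots lie on opposite sides of each copy: with $s=s_L$ one gets $t=t_R$ (in Figure \ref{FMB}, $s$ is in the right copy and $t$ in the left). Your construction places $L$ in parallel with the chain formed by the edge $\{s_L,s_R\}$, then $R$, then the edge $\{t_R,t_L\}$. It therefore outputs the terminals $s_L,t_L$, a same-side pair, not the roots $s_L,t_R$.

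The weakened claim ``SP with terminals a same-side pair'' is not preserved. To perform the step at level $k+1$, you must apply the hypothesis to each copy of $MB_k$ with terminals at the vertices where the copies are joined. Those are its two roots, an opposite-side pair. No relabelling identifies the two kinds of pair, since in $MB_k$ one has $dist(s_L,t_R)=k+1$ but $dist(s_L,t_L)=k$. So the hypothesis is used with one pair of terminals and the conclusion delivers a different one. Your remark that the choice of terminals is immaterial as long as it is consistent is precisely what fails, because your choice is not consistent.

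The repair is to put the two roots at opposite corners of the underlying $4$-cycle on $s_L,s_R,t_R,t_L$:
\begin{itemize}
\item series-compose $L$ (terminals $s_L,t_L$) with the edge $\{t_L,t_R\}$ at $t_L$;
\item series-compose the edge $\{s_L,s_R\}$ with $R$ (terminals $s_R,t_R$) at $s_R$;
\item parallel-compose the two results at $s_L=s$ and $t_R=t$.
\end{itemize}
This presents $MB_k$ as an SP graph whose terminals are exactly its two roots, so the strong claim ``$MB_k$ is SP with terminals $s,t$'' carries through the induction. This is the paper's argument: the $4$-cycle $s',s,t',t$ with the edges $\{s',t\}$ and $\{s,t'\}$ expanded into copies of $MB_{k-1}$.

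Separately, your base case describes the wrong graph. In the rooted $BT_1$ only the non-root vertex is a leaf, so $MB_2$ is the path $s,a,b,t$ with the single matching edge $\{a,b\}$, not a $4$-cycle. This is consistent with $dist(s,t)=k+1=3$ and with $MB_3$ being an $8$-cycle. That path is SP with terminals $s,t$ by two series extensions, so this part is easily fixed.
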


\begin{proof}
The proof is by induction on $k$. Observe that an $MB_2$ is a path on four vertices and is therefore SP. For $k\geq 3$, assume that an $MB_{k-1}$ is SP. We then show that an $MB_k$ can be generated using SP operations applied on a $4$-cycle on vertices $s', s, t', t$, which is also SP. Consider edges $\{s', t\}$ and $\{s, t'\}$. Using a sequence of SP operations, an $MB_{k-1}$ can be generated from each of them since an $MB_{k-1}$ is assumed to be SP. This results in two instances of an $MB_{k-1}$ with connected terminals, i.e. an $MB_k$.
\end{proof}

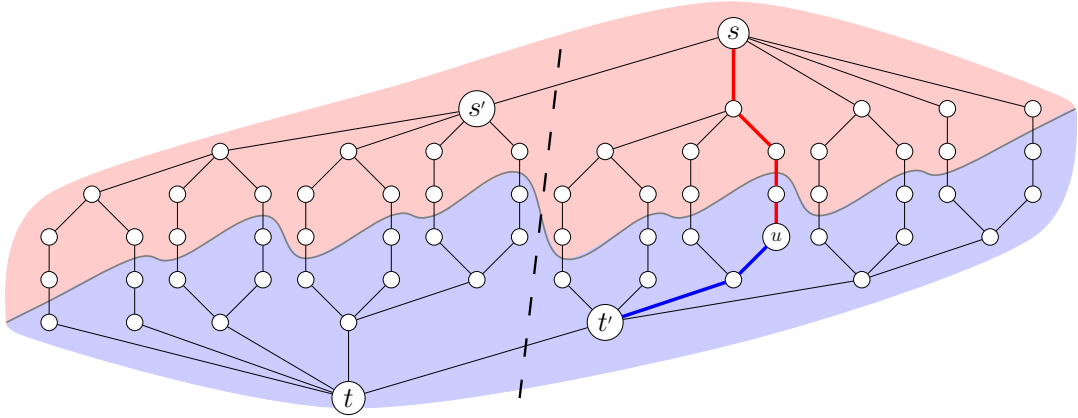
\begin{figure}[ht!]
\centering
\resizebox{15cm}{!}{
\begin {tikzpicture}[ auto ,node distance =1.7 cm and 2.3cm,, thin, state/.style ={circle ,top color =white , bottom color = white , draw, black, minimum width =0.3 cm}]
\tikzset{edge/.style = {->,> = latex'}}
\def\k{1.3}

\draw [fill=red!20, draw=red!20] plot [smooth cycle] coordinates {(\k*17,\k*8.5) (\k*9,\k*6.5) (\k,\k*4)(0,\k*1) (\k*1,\k*1.5) (\k*3,\k*2.5) (\k*4,\k*2.5) (\k*6,\k*3.5) (\k*7,\k*2.5) (\k*9,\k*3.5) (\k*10,\k*3.5) (\k*12,\k*4.5)  (\k*13,\k*2.5) (\k*15,\k*3.5) (\k*16,\k*3.5) (\k*18,\k*4.5) (\k*19,\k*3.5) (\k*21,\k*4.5) (\k*22,\k*4.5) (\k*24,\k*5.5)  (\k*24.6,\k*6.3)};

\draw [fill=blue!20, draw=blue!20] plot [smooth cycle] coordinates {(\k*8,-1*\k) (\k*0.4,\k*0.7) (\k*1,\k*1.5) (\k*3,\k*2.5) (\k*4,\k*2.5) (\k*6,\k*3.5) (\k*7,\k*2.5) (\k*9,\k*3.5) (\k*10,\k*3.5) (\k*12,\k*4.5)  (\k*13,\k*2.5) (\k*15,\k*3.5) (\k*16,\k*3.5) (\k*18,\k*4.5) (\k*19,\k*3.5) (\k*21,\k*4.5) (\k*22,\k*4.5) (\k*24,\k*5.5)  (\k*25.02,\k*5.85) (\k*24,\k*3) (\k*17,\k*0.5)  };

\draw [gray, line width=1.5pt] plot [ smooth, tension=0.7] coordinates { (0,\k*1) (\k*1,\k*1.5) (\k*3,\k*2.5) (\k*4,\k*2.5) (\k*6,\k*3.5) (\k*7,\k*2.5) (\k*9,\k*3.5) (\k*10,\k*3.5) (\k*12,\k*4.5)  (\k*13,\k*2.5) (\k*15,\k*3.5) (\k*16,\k*3.5) (\k*18,\k*4.5) (\k*19,\k*3.5) (\k*21,\k*4.5) (\k*22,\k*4.5) (\k*24,\k*5.5)  (\k*25,\k*6)};

\draw [dashed,dash pattern=on 16pt off 20pt,  line width=2pt] plot  coordinates { (\k*12,-1) (\k*13,\k*7+1)};
\foreach \i in {1,3}{
    \foreach \j in {1,2,3}{
        \node[state] (\i\j) at (\k*\i,\k*\j)[minimum size=0.5cm]{};
        }
    \draw[] (\i2) -- (\i3);
    \draw[] (\i1) -- (\i2);
    }
\foreach \i in {4,6, 7, 9, 13, 15}{
    \foreach \j in {2,3,4}{
        \node[state] (\i\j) at (\k*\i,\k*\j)[minimum size=0.5cm]{};
        }
    \draw[] (\i2) -- (\i3);
    \draw[] (\i4) -- (\i3);
    }
\foreach \i in {5,8}{
    \node[state] (\i5) at (\k*\i,\k*5)[minimum size=0.5cm]{};
    \node[state] (\i1) at (\k*\i,\k*1)[minimum size=0.5cm]{}; 
    }
\node[state] (145) at (\k*14,\k*5)[minimum size=0.5cm]{};
\node[state] (141) at (\k*14,\k*1)[minimum size=0.5cm]{\Huge$t$\Large$'$}; 
\foreach \i in {10,12, 16, 19, 21}{
    \foreach \j in {3,4,5}{
        \node[state] (\i\j) at (\k*\i,\k*\j)[minimum size=0.5cm]{}; 
        }
    \draw[] (\i4) --(\i3);
    \draw[] (\i4) --(\i5);
    }
    \node[state] (183) at (\k*18,\k*3)[minimum size=0.5cm]{\Large$u$}; 
    \node[state] (184) at (\k*18,\k*4)[minimum size=0.5cm]{}; 
    \node[state] (185) at (\k*18,\k*5)[minimum size=0.5cm]{}; 
    \draw[red, line width =3pt] (184) --(183);
    \draw[red, line width =3pt] (184) --(185);
\foreach \i in {17,20}{
    \node[state] (\i6) at (\k*\i,\k*6)[minimum size=0.5cm]{};
    \node[state] (\i2) at (\k*\i,\k*2)[minimum size=0.5cm]{}; 
    }
\node[state] (116) at (\k*11,\k*6)[minimum size=0.5cm]{\Huge$s$\Large$'$};
\node[state] (112) at (\k*11,\k*2)[minimum size=0.5cm]{}; 
\foreach \i in {22,24}{
    \foreach \j in {4,5,6}{
        \node[state] (\i\j) at (\k*\i,\k*\j)[minimum size=0.5cm]{};
        }
    \draw[] (\i4) -- (\i5);
    \draw[] (\i5) -- (\i6);
    }
\node[state] (24) at (\k*2,\k*4)[minimum size=0.5cm]{};
\node[state] (233) at (\k*23,\k*3)[minimum size=0.5cm]{};
\node[state] (t) at (\k*8,-1)[minimum size=0.5cm]{\Huge$t$};
\node[state] (s) at (\k*17,\k*7+1)[minimum size=0.5cm]{\Huge$s$};

\foreach \i\j in {44/55, 55/64, 42/51, 51/62, 74/85, 85/94, 72/81, 81/92, 134/145, 145/154, 132/141, 141/152,  105/116, 116/125, 103/112, 112/123, 165/176, 163/172,  195/206, 206/215, 193/202, 202/213, 13/24, 24/33, 224/233, 233/244, 233/202, 145/176, 112/81, 24/55, 141/202, 85/116, 55/116}{
    \draw[] (\i) --(\j);
    }
\foreach \i\j in {176/185, s/176}{
    \draw[red, line width =3pt] (\i) --(\j);
    }
    \draw[blue, line width =3pt] (141) --(172);
    \draw[blue, line width =3pt] (183) --(172);
\foreach \i\j in {t/11,t/31,t/51,t/81,t/141, s/116, s/206,s/226,s/246}{
    \draw[] (\i) --(\j);
    }



\end{tikzpicture}
}
\caption{A Mirrored-Binomial SP graph $MB_k$ can be seen as: an upper and lower $BT_{k-1}$ joined at the leaves, or alternatively, two $MB_{k-1}$ side by side, joined at their terminals. Since $dist(s, t') = k$, the path from $u$ to $s$ (in red) is of length $\lceil \frac{k}{2}\rceil +j$, and the path from $u$ to $t'$ (in blue) is of length $\lfloor \frac{k}{2}\rfloor -j$.}
\label{FMB}
\end{figure}

To show the broadcast time of an $MB_k$, we highlight the following lemma.

\begin{lemma} \label{b(s,MB_k)}
    For an $MB_k$ and $u \in \{s, s', t, t'\}$, $b(u, MB_k) = k + 1$.
\end{lemma}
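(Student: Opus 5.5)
The plan is to prove the upper bound $b(u,MB_k)\le k+1$ by induction on $k$, using the recursive description of $MB_k$ as two copies of $MB_{k-1}$ joined at their terminals. The lower bound $b(u,MB_k)\ge k+1$ will come from a degree count.

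\textbf{Setup and symmetry.} From the proof that $MB_k$ is SP, $MB_k$ arises from the $4$-cycle $s',s,t',t$ by growing one copy $A$ of $MB_{k-1}$ on the edge $\{s,t'\}$ and one copy $B$ of $MB_{k-1}$ on the edge $\{s',t\}$. So $s\sim s'$ and $t\sim t'$ are the connecting edges, while $\{s,t'\}$ and $\{s',t\}$ are the terminal pairs of the two copies. The graph has two symmetries: reflecting upper/lower swaps $s\leftrightarrow t$ and $s'\leftrightarrow t'$, and reflecting left/right swaps $s\leftrightarrow s'$ and $t\leftrightarrow t'$. Hence it suffices to treat $u=s$. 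In the induction I will use the statement for the terminals $s,t$ of $MB_{k-1}$.

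\textbf{Upper bound.} The base case is $MB_2$, a path on four vertices whose vertices are exactly $s,s',t,t'$. From any of its vertices the broadcast time is at most $3=k+1$.

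For $k\ge3$, the originator $s$ does the following:
\begin{enumerate}
\item At time $1$, $s$ informs $s'$, which is a terminal of copy $B$.
\item From time $2$ on, $s$ runs inside copy $A$ the scheme from its terminal given by the induction hypothesis, which takes $k$ time units. Copy $A$ is therefore done by time $1+k$.
\item In parallel, $s'$ runs the corresponding scheme inside copy $B$, also starting at time $2$, so $B$ is done by time $k+1$.
\end{enumerate}
The two copies share no vertices, so there are no conflicts and $b(s,MB_k)\le k+1$.

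\textbf{Lower bound.} This is the step needing the most care. Since $|MB_k|=2^k$, finishing in $k$ rounds would require the informed set to double in every round. That forces every informed vertex, in particular the originator, to make a call in each of the $k$ rounds, so the originator would need degree at least $k$. I will show by induction that each of $s,s',t,t'$ has degree exactly $k-1$ in $MB_k$:
\begin{itemize}
\item For $MB_2$, these vertices have degree at most $1$.
\item In $MB_k$, the vertex $s$ is a terminal of copy $A$, where by induction it has degree $k-2$, and it gains exactly one extra edge, to $s'$. The same holds for the other three by symmetry.
\end{itemize}
Equivalently, $s$ is the root of the upper $BT_{k-1}$, which has degree $k-1$, and the leaf-to-leaf edges do not touch it. Consequently $b(u,MB_k)\ge k+1$, which together with the upper bound gives equality.

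The main thing to verify carefully is that the recursive decomposition really identifies the terminals of $MB_k$ with terminals of the copies. It must also do so with the stated adjacencies $s\sim s'$ and $t\sim t'$, so that the induction hypothesis applies to the vertex that receives the first call.
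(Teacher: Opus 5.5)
Your proof is correct, but it reaches both bounds by a different route than the paper.

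For the upper bound, the paper argues directly on the two binomial trees. Vertex $s$ completes Broadcast Scheme~\ref{binoscheme} in the upper $BT_{k-1}$ by time $k-1$. The leaves informed at time $k-1$ then call across to the lower leaves at time $k$, and these call their parents at time $k+1$. Your induction on the splitting into two copies of $MB_{k-1}$ yields essentially the same calls once unrolled. The call $s\to s'$ is the call to the largest subroot, and the $MB_2$ base case supplies the last three rounds: root to leaf, leaf across, leaf to parent. Because your version relies only on the recursive description, it avoids two facts the paper leaves implicit. These are that the leaf-to-leaf edges form a perfect matching, and that the lower leaves have pairwise distinct parents covering the rest of the lower tree.

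For the lower bound, the paper uses $dist(s,t)=k+1$. You instead use that $2^k$ vertices force the informed set to double in every round, which an originator of degree $k-1$ cannot sustain; this is the counting idea behind Theorem~\ref{thrm_nobg2k}. Your version needs only the degree, which is immediate since $s$ is the root of the upper $BT_{k-1}$ and no leaf edge touches it. The distance version is somewhat more robust: it still applies after leaves are deleted and $n<2^k$, where the counting argument gives nothing.

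One small correction concerns the base case. You say the vertices of the path $MB_2$ are exactly $s,s',t,t'$ and that all have degree at most $1$. In fact the two middle vertices have degree $2$ and broadcast time $2$, so for $k=2$ the lemma holds only for the end vertices. Your inductive step uses only the degree of the terminals of $MB_{k-1}$, so you should state the degree claim for terminals only. The lemma for $u\in\{s',t'\}$ should then be read for $k\ge 3$, a point the paper's ``without loss of generality'' also glosses over.
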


\begin{proof}

Without loss of generality, assume $u = s$.

The lower bound follows from the fact that $dist(s, t) = k+1$.

To prove the upper bound, consider the following broadcast scheme for $s$. 
\begin{scheme}
\label{binoschemeMB}
    Broadcast Scheme \ref{binoscheme} is used to finish informing the $BT_{k-1}$ rooted at $s$ at time $k-1$. At time $k$, the vertices that were informed at time $k-1$ inform the leaves of the $BT_{k-1}$ rooted at $t$. At time $k+1$, the vertices that were informed at time $k$ inform the remaining uninformed vertices, i.e. their parents in the $BT_{k-1}$ rooted at $t$.
\end{scheme}
\end{proof}

\begin{theorem}
    The broadcast time of an $MB_k$ on $n = 2^k$ vertices is $b(MB_k)= \left\lfloor \frac{3 \log n} {2} \right\rfloor$.
\end{theorem}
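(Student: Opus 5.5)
We have to show $b(MB_k)=\floor{3k/2}$ with $n=2^k$. The upper bound is an induction on $k$ using the recursive structure (two copies of $MB_{k-1}$ joined at their terminals) together with Lemma \ref{b(s,MB_k)}. The lower bound comes from exhibiting a vertex that is far from much of the graph.

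\textbf{Upper bound.} Let $u$ be any originator. Write $MB_k$ as two side-by-side copies $L$ and $R$ of $MB_{k-1}$. Say $u\in R$, with terminals $s,t'$, while $L$ has terminals $s',t$; the terminal edges are $\{s,s'\}$ and $\{t,t'\}$, matching Figure \ref{FMB}.

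\begin{enumerate}
\item The outer terminals $s$ and $t'$ are at distance $k$ (Figure \ref{FMB}). So $u$ lies on a shortest $s$--$t'$ path of the form red/blue, with $dist(u,s)=\ceil{k/2}+j$ and $dist(u,t')=\floor{k/2}-j$ for some $j$. By symmetry we may assume $u$ is at distance at most $\floor{k/2}$ from one outer terminal, say $t'$. (Establishing this distance claim for every vertex, not only those on the highlighted path, is the first thing to check. I would do it by induction: every vertex lies on some leaf-to-root path of an upper or lower binomial tree, and that path extends to a geodesic between a pair of mirrored terminals.)
\item The originator $u$ sends the message along this path to $t'$, arriving by time $\floor{k/2}$.
\item From $t'$, we run the scheme of Lemma \ref{b(s,MB_k)} on $MB_k$, which takes $k+1$ more steps. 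Combined with the previous step this gives $\floor{k/2}+k+1$, which is too large by one.
\end{enumerate}

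The refinement is to first call across: $t'$ informs $t$, and then $t'$ and $t$ broadcast inside their respective halves $R$ and $L$ (copies of $MB_{k-1}$). By the lemma each half finishes in $k$ steps, but we need about $k-1$. To recover the lost step, vertices on the path from $u$ (already informed early) should start the binomial-tree scheme while the message travels. Concretely, I would induct on $k$ with the stronger hypothesis that $b(u,MB_k)\le \floor{k/2}+k$ whenever $dist(u,\{s,s',t,t'\})\le\floor{k/2}$. In the inductive step, handle two cases: whether the copy of $MB_{k-1}$ containing $u$ can be finished by the induction hypothesis while the terminal call to the other copy happens in parallel, the other copy then being finished from its terminal by Lemma \ref{b(s,MB_k)} within the remaining time. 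The parity of $k$ determines the slack. The main obstacle is the arithmetic of this parallel step, that is, verifying that $\floor{(k-1)/2}+(k-1)$ plus the delay of the cross call fits within $\floor{3k/2}$ in both parities, possibly by choosing the cross terminal edge ($s$--$s'$ or $t$--$t'$) nearer to $u$.

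\textbf{Lower bound.} Take $u$ at the midpoint of the red/blue geodesic with $j=0$ (the vertex $u$ in Figure \ref{FMB}). Its distances to the four terminals are at least $\floor{k/2}$. I would find a vertex $w$ with $dist(u,w)=\floor{3k/2}$; the natural candidate is the mirrored deep vertex in the other copy of $MB_{k-1}$. If the diameter gives only $\floor{3k/2}-1$, I would instead use the counting argument: until time $\floor{k/2}$ the informed set stays within a path plus small binomial subtrees near $u$, so fewer than $2^{k}$ vertices can be informed by time $\floor{3k/2}-1$.
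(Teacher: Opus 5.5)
\textbf{Upper bound.} Your induction cannot close, and the obstruction is structural rather than a parity computation. Take the critical originator $u\in R$ with $dist(u,s)=\lceil k/2\rceil$ and $dist(u,t')=\lfloor k/2\rfloor$ (your $j=0$). The nearest terminal of $L$ is $t$, at distance $\lfloor k/2\rfloor+1$. Finishing the copy $L\cong MB_{k-1}$ from a single terminal via Lemma~\ref{b(s,MB_k)} costs another $k$ steps, and this is tight because the terminals of $MB_{k-1}$ are at distance $k$. So the total is at least $\lfloor 3k/2\rfloor+1$ however $R$ is handled. This is exactly the off-by-one you already hit, and the induction does not remove it. Your ``stronger hypothesis'' is also not stronger: $\lfloor k/2\rfloor+k=\lfloor 3k/2\rfloor$, and by your step~1 every vertex is within $\lfloor k/2\rfloor$ of a terminal, so it is just the theorem.

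The missing idea, which is the paper's scheme, is to feed $L$ through \emph{both} of its terminals and to broadcast in its two $BT_{k-2}$ halves rather than in $L$ as a whole. Let $u$ call first toward the farther terminal $s$ and then toward $t'$. Then $s'$ is informed at time $\lceil k/2\rceil+j+1$ and $t$ at time $\lfloor k/2\rfloor-j+2$. Now $s'$ runs Broadcast Scheme~\ref{binoscheme} on the upper $BT_{k-2}$ of $L$ and $t$ on the lower one, each needing only $k-2$ steps; $s$ and $t'$ do the same in $R$ after their cross calls.
\begin{itemize}
\item For $j=0$ this finishes by $\lfloor 3k/2\rfloor$.
\item For $j=1$ and $k$ odd, the upper half of $L$ is one step short. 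You repair this with the leaf matching: the lower half of $L$ is complete at time $\lfloor 3k/2\rfloor-1$, and in the last step its leaves inform the still-uninformed dimension-$0$ children of the upper half, all of which are leaves.
\item For $j\ge 2$, $t'$ is informed by time $\lfloor k/2\rfloor-1$ and the Lemma suffices.
\end{itemize}

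\textbf{Lower bound.} Neither of your options works as stated. Every vertex $v$ satisfies $dist(s,v)+dist(v,t)=k+1$, so the diameter of $MB_k$ is exactly $k+1<\lfloor 3k/2\rfloor$ for $k\ge 4$. For $k=4$, for instance, your $u$ has eccentricity $5$. So no witness $w$ exists. A global count also fails: smallness of the informed set at time $\lfloor k/2\rfloor$ cannot show that fewer than $2^k$ vertices are informed by time $\lfloor 3k/2\rfloor-1$, since even two informed vertices at that point could double to $2^k$ in the remaining $k-1$ steps.

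What works is a local count on the half $L$. It has $2^{k-1}$ vertices and is entered only through $s'$ (from $s$) and $t$ (from $t'$). If these are informed at times $\tau_{s'}$ and $\tau_t$, then at most $2^{T-\tau_{s'}}+2^{T-\tau_t}$ vertices of $L$ are informed by time $T$. Since $dist(s,u)+dist(u,t')=dist(s,t')=k$, no neighbor of $u$ is closer to both $s$ and $t'$. Hence $u$'s first call delays one of them by a unit, giving one of two cases:
\begin{itemize}
\item $\tau_{s'}\ge\lceil k/2\rceil+2$ and $\tau_t\ge\lfloor k/2\rfloor+1$, or
\item $\tau_{s'}\ge\lceil k/2\rceil+1$ and $\tau_t\ge\lfloor k/2\rfloor+2$.
\end{itemize}
With $T=\lfloor 3k/2\rfloor-1$, both cases give at most $2^{k-2}+2^{k-3}<2^{k-1}$ informed vertices of $L$. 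Hence $b(u,MB_k)\ge\lfloor 3k/2\rfloor$. This makes precise the paper's somewhat informal case split on $u$'s first call.
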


\bigbreak

\begin{proof}

Observe that $\log n = k$. To prove the upper bound, we show there exists a broadcast scheme for any originator that completes broadcasting by time $\floor{\frac{3k}{2}}$.

Assume without loss of generality that the originator $u$ is on the right side of the graph, between $s$ and $t'$ with $dist(s, u) \geq dist(u, t')$ (see Figure \ref{FMB}). Then, it suffices to show that the vertices between $s'$ and $t$, i.e. the left side of the graph, are informed by time $\floor{\frac{3k}{2}}$. This follows from the fact that any path from $u$ to the left side of the graph must go through $s'$ or $t$, and it can be guaranteed that $s$ and $t'$ are informed no later than $s'$ and $t$. Note that the following broadcast scheme prioritizes informing terminals as early as possible.

Since the distance between $s$ and $t'$ is $k$, the distances $dist(s, u)$ and $dist(u, t')$ can be written in terms of some $j$:

\begin{equation} \label{MB_k_dist_eq}
dist(s, t') = dist(s, u) + dist(u, t') = (\left\lceil{\frac{k}{2}}\right\rceil + j) + (\left\lfloor\frac{k}{2}\right\rfloor-j)
\end{equation}

Assume that in the first time unit, $u$ informs towards $s$ and from the second time unit towards $t'$. Then, $s'$ gets informed at time $\ceil{\frac{k}{2}} + j + 1$, $t'$ gets informed at time $\floor{\frac{k}{2}}-j+1$, and $t$ gets informed at time $\floor{\frac{k}{2}}-j+2$ (see Figure \ref{FMB}). Once $s'$ and $t$ are informed, they conduct Broadcast Scheme \ref{binoscheme} in their $BT_{k-2}$ subtrees. At time $\floor{\frac{3k}{2}}$, vertex $s'$ will have informed its binomial subtree of dimension $\floor{\frac{3k}{2}} - (\ceil{\frac{k}{2}} + j + 1) \geq k-j-2$. At time $\floor{\frac{3k}{2}}-j$, vertex $t$ will have informed its $BT_{k-2}$ subtree.

\begin{itemize}
    \item If $j = 0$, broadcasting completes at time $\floor{\frac{3k}{2}}$.
    \item If $j = 1$, the $BT_{k-2}$ subtree rooted at $t$ is informed at time $\floor{\frac{3k}{2}}-1$. In the following time unit, its leaves inform the leaves of the $BT_{k-2}$ subtree rooted at $s'$. In this case, broadcasting completes at time $\floor{\frac{3k}{2}}$.
    \item If $j \geq 2$, vertex $t'$ gets informed at time at most $\floor{\frac{k}{2}}-1$, and by Lemma \ref{b(s,MB_k)}, broadcasting completes by time $\floor{\frac{3k}{2}}$.
\end{itemize}

To prove the lower bound on the broadcast time of an $MB_k$, we show the existence of an originator $u$ such that $b(u, MB_k) \geq \floor{\frac{3k}{2}}$. Let $u$ be a vertex between $s$ and $t'$ such that $j=0$ in Equation \ref{MB_k_dist_eq}. It has already been established that if $u$ informs towards $s$ in the first time unit, the graph is informed by time $\floor{\frac{3k}{2}}$. With this strategy, the left side of the $MB_k$ cannot be informed sooner as $s$ and $t'$ inform $s'$ and $t$ respectively as early as possible. If instead $u$ uses a different strategy and informs towards $t'$ in the first time unit, then $s$ and $t$ cannot get informed at time sooner than $\floor{\frac{k}{2}}+1$, and by Observation \ref{obs_binoroottime} both require $k-1$ additional time units to finish broadcasting in their $BT_{k-1}$'s. This results in $\floor{\frac{k}{2}}+1 +k-1= \floor{\frac{3k}{2}}$ time units to complete broadcasting.

Combining the upper and lower bounds, we get that $b(MB_k) = \floor{\frac{3k}{2}}$.
\end{proof}

Variations of an $MB_k$ can be constructed by removing up to $2^{k-1} -1$ leaves before joining the two $BT_{k-1}$'s, which does not increase the broadcast time. Such constructions provide SP graphs on $n$ vertices with broadcast time at most $\floor{\frac{3\ceil{\log n}}{2}}$ and maximum degree $\ceil{\log n} - 1$, for any $n$. This suggests that network topologies based on these SP graphs are less prone to fault tolerance issues as opposed to topologies that rely on vertices of large degree to achieve efficient broadcasting.

\subsection {Relaxed Broadcast Series-Parallel Graph}

A question arises as to whether there exist SP broadcast graphs on $n$ vertices for any $n$. The following theorem shows that this is not the case.

\begin{theorem}
\label{thrm_nobg2k}
There does not exist an SP broadcast graph on $2^k$ vertices for $k \geq 3$.
\end{theorem}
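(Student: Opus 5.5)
The plan is a degree argument: a broadcast graph on exactly $2^k$ vertices must have minimum degree at least $k$, whereas every simple SP graph has a vertex of degree at most $2$. For $k \geq 3$ these two facts are incompatible.

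\textbf{Step 1 (minimum degree of a broadcast graph on $2^k$ vertices).} Let $G$ have $n=2^k$ vertices and $b(G)=k$, and fix any originator $v$ together with a broadcast scheme finishing in $k$ time units. As noted in the introduction, the number of informed vertices at most doubles per time unit. To reach $2^k$ vertices after $k$ time units it must therefore exactly double at every time unit. Hence every informed vertex, and in particular $v$, places a call at each of the time units $1,\dots,k$. Each call of $v$ goes to a distinct uninformed neighbor, so $\deg(v)\geq k$. Since $v$ was arbitrary, $\delta(G)\geq k\geq 3$.

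\textbf{Step 2 (every simple SP graph has a vertex of degree at most $2$).} I would prove the stronger statement needed for induction over the series/parallel composition definition. Claim: every simple SP graph with terminals $s,t$ and at least $3$ vertices has a non-terminal vertex of degree at most $2$. The induction runs on the number of compositions, allowing multi-edges in intermediate steps; only the degree in the underlying simple graph matters.
\begin{itemize}
\item \emph{Series composition.} Write $G_1 \cdot G_2$ with merged vertex $m$. If either part has $\geq 3$ vertices, its interior vertex of small degree, given by induction, is untouched by the composition and remains non-terminal with the same degree. Otherwise both parts are single edges, possibly with multi-edges. Then $m$ has exactly two distinct neighbors and is non-terminal.
\item \emph{Parallel composition.} Only the terminals are merged, so interior vertices of $G_1$ and $G_2$ keep their neighborhoods. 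If some part has $\geq 3$ vertices, it supplies the required vertex. If both parts are just $\{s,t\}$, the result has two vertices and the claim is vacuous.
\end{itemize}
Any simple SP graph with at least $3$ vertices therefore has a vertex of degree at most $2$. Alternatively, I could invoke the classical fact that SP graphs are exactly the $K_4$-minor-free graphs, together with Dirac's theorem that such graphs are $2$-degenerate.

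\textbf{Conclusion.} For $k\geq 3$ we have $2^k\geq 8$ vertices. An SP broadcast graph on $2^k$ vertices would then contain a vertex of degree at most $2<k$, contradicting Step 1.

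The main obstacle is Step 2: the claim must be phrased so that the induction closes. In particular it has to handle the degenerate cases where parts are single (multi-)edges, and it has to make sure that multi-edges do not inflate degrees, which is why degree is measured in the underlying simple graph. Step 1 is routine.
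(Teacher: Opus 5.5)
Your proof is correct. It has the same outline as the paper's: find a vertex with only two neighbors, then show it cannot originate a $k$-round broadcast that reaches $2^k$ vertices. Both steps, however, are justified differently.

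\textbf{The low-degree vertex.} The paper uses the extension-based definition. The vertex created by the last series extension has exactly two neighbors, because every later operation is a parallel extension and adds no new neighbors. This one-line argument removes what you flag as the main obstacle, though your induction over compositions is also sound. In your aside, you only need that SP graphs are $K_4$-minor-free. Equality fails for two-terminal SP graphs (for instance $K_{1,3}$ is $K_4$-minor-free but not SP), but the inclusion together with Dirac's theorem suffices.

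\textbf{The counting.} You use exact doubling to show that a broadcast graph on exactly $2^k$ vertices has minimum degree at least $k$. The paper counts directly: a degree-$2$ originator places calls only at times $1$ and $2$, so at most $1+2^{k-1}+2^{k-2}$ vertices are informed by time $k$. Your minimum-degree argument is clean but tied to $n=2^k$. The paper's explicit count rules out SP broadcast graphs for every $n$ with $1+2^{k-1}+2^{k-2}<n\leq 2^k$, and this stronger form is what the paper invokes at the start of Section 4.
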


\begin{proof}

Consider an SP graph on $2^k$ vertices with $k \geq 3$. Such a graph must have a sequence of SP operations for its construction with at least six series extensions, as it has at a minimum of eight vertices. The last vertex introduced in a series extension, say $v$, must have only two neighbors. As an originator, $v$ does not have any vertices to inform after time 2. Therefore, when originating from $v$, the maximum number of vertices that can be informed in $k$ time units is at most $1 + 2^{k-1} + 2^{k-2}$, which is strictly less than $2^k$ when $k \geq 3$.
\end{proof}

\begin{theorem}

For any $n$, there exists an SP graph on $n$ vertices with broadcast time at most $\ceil{\log n} + 1$.

\end{theorem}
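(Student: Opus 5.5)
Write $k=\ceil{\log n}$, so $n \le 2^k$. The plan is to build an SP graph on exactly $2^k$ vertices with broadcast time at most $k+1$, and then handle smaller $n$ by deleting vertices without increasing the broadcast time. The natural candidate is a binomial tree made SP and of small diameter by adding a universal-ish vertex. Concretely, take $BT_{k-1}$ on $2^{k-1}$ vertices with root $r$. Form $G_k$ as two copies of $BT_{k-1}$ whose roots $r_1,r_2$ are both joined to every vertex of both copies (a "fan" structure), or more simply take one $BT_k$ and join its root $r$ to every other vertex. The second option is cleaner. The graph $H_k$, consisting of $BT_k$ plus all edges $\{r,v\}$, is SP: starting from a star at $r$ (which is SP via series compositions) one adds each tree edge $\{p,c\}$ with $c$ a child of $p\neq r$. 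This edge forms a triangle $r,p,c$, obtained by a series extension on $\{r,c\}$ followed by identification. Equivalently, I would argue by induction on the Definition \ref{binodef}\hyperref[binodef]{a} structure. Each subtree $BT_{i}$ with root $x$, together with $r$ adjacent to all its vertices, is a two-terminal SP graph with terminals $r,x$. $H_k$ is then the parallel composition of these pieces at $r$, with the pieces attached through series composition on edges $\{r,x\}$.

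For the broadcast time, an originator $u\neq r$ informs $r$ at time $1$. From time $2$ on, $r$ runs Broadcast Scheme \ref{binoscheme} on $BT_k$, which by Observation \ref{obs_binoroottime} finishes by time $k+1$. Meanwhile $u$ may idle; $u$ being already informed only helps, because a vertex that is already informed is simply skipped and its subtree call is replaced. The schedule from $r$ needs care in this case, but informed vertices can still forward. From $r$ itself, the broadcast time is $k$.

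For general $n<2^k$, delete $2^k-n$ leaves of $BT_k$ other than those adjacent to $r$, i.e.\ those created last in Definition \ref{binodef}\hyperref[binodef]{b} with $d=1$. At most $2^{k-1}$ such leaves exist, which suffices since $n>2^{k-1}$. Deleting a leaf and its edges keeps the graph SP, as this reverses a series extension plus parallel edges. Calls to deleted leaves are simply skipped, so the broadcast time stays at most $k+1$.

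The main obstacle is verifying SP-ness cleanly: the graph must avoid a $K_4$ minor. I would check that a $BT_k$ plus a dominating vertex has no $K_4$ minor, because contracting $r$ away leaves a tree, i.e.\ the graph is a tree plus an apex. A tree plus an apex can however contain $K_4$ when the tree contains a path on three vertices, since $r$ together with that path's vertices gives $K_4$ minus one edge, and branching gives a full $K_4$. If this fails, I would fall back to joining $r$ only to the leaves-level subroots, namely to the roots $x$ of the $BT_1$'s in the $d=1$ decomposition. Then each $BT_1$ pair plus $r$ is a triangle hanging off a series-composed structure. From $r$, this informs the $2^{k-1}$ pair roots by a binomial-like schedule on a star, which is too slow. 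So the real work is choosing the edge set, and I expect the intended solution to be a variant of $MB_k$ or two binomial trees sharing terminals.
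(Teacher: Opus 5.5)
Your broadcast argument is fine: a non-root originator calls the apex at time $1$, and the apex then runs Broadcast Scheme \ref{binoscheme}, finishing by time $k+1$. The genuine gap is SP-ness, and you end the proposal without a construction. Under the paper's definition (graphs built from one edge $\{s,t\}$ by series and parallel extensions), your $H_k$ is in fact \emph{not} SP for $k\ge 3$. Both extensions preserve the property that $G+\{s,t\}$ has no cut vertex. But $H_k-r$ has $k\ge 3$ components (the subtrees $BT_{k-1},\dots,BT_0$ of the root's children), and deleting pendant leaves does not change this. So $r$ remains a cut vertex of $H_k$ plus any single edge. Your inductive argument breaks exactly here. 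Each piece ``$BT_i$ with root $x$, plus $r$'' is indeed SP with terminals $x,r$ (it is the $B_i$ of Theorem \ref{Bk}), but the pieces share only $r$, so gluing them is not a parallel composition. For the same reason a star $K_{1,m}$ with $m\ge 3$ is not SP, and ``identification'' is not an SP operation.

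Conversely, your reason for abandoning $H_k$ is false: a tree plus an apex never has a $K_4$ minor. At least three of the four branch sets avoid the apex, and three pairwise-adjacent connected subtrees would force a cycle in the tree. For example, $K_{1,3}$ plus an apex is $K_{1,1,3}$, which is $K_4$-minor-free. The real obstruction is connectivity at the root, not a $K_4$ minor, and your fallback (which you already note is too slow) is unnecessary.

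The missing idea, which is what the paper does, is to link the root's children. Let $r_k,\dots,r_1$ be the roots of $BT_{k-1},\dots,BT_0$, and add the edges $\{r_i,r_{i-1}\}$ for $2\le i\le k$. The graph is then a fan with apex $t=r$ and path $r_k r_{k-1}\cdots r_1$ (SP, grown outward from the edge $\{r_1,t\}$), with each spoke $\{t,r_i\}$ replaced by a copy of $B_{i-1}$. It is therefore SP with terminals $s=r_1$ and $t$. The new edges are never used, so your bound of $\ceil{\log n}+1$ is unaffected. Deleting $2^k-n\le 2^{k-1}-1$ leaves other than $r_1$ (each of degree $2$, adjacent to its parent and to $t$) amounts to omitting series extensions, so the graph stays SP. As a minor correction, you need \emph{at least} $2^k-n$ deletable leaves, not ``at most'', and there are $2^{k-1}-1$ of them.
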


\begin{proof}
    For some $n$ and $k = \ceil{\log n}$, consider a $BT_k$ rooted at vertex (and terminal) $t$ and denote its children of decreasing degrees by $r_{k}, r_{k-1}, ..., r_1$ respectively. Designate $r_1$ as terminal $s$. Add edges $\{r_i, r_{i-1}\}$ for all $i\in \{2,...,k\}$ (see the blue edges in Figure \ref{CBT_k_SSP}). Delete $2^k-n$ leaves to obtain the desired number of vertices $n$ and connect all vertices to $t$ (see Figure \ref{CBT_k_SSP}). The resulting graph can easily be shown to be $SP$ by induction on $k$ using upcoming Theorem \ref{Bk}. As the edges added to the $BT_k$ ensure that every vertex is at distance at most 1 from root $t$, Broadcast Scheme \ref{binoscheme} can be used to complete broadcasting by time $\ceil{\log n} + 1$ regardless of the originator.
\end{proof}

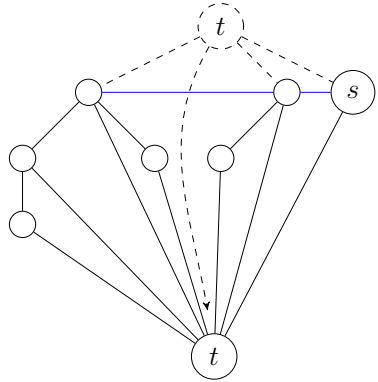
\begin{figure}[!ht]
    \centering
    \resizebox{5cm}{!}{%
        \centering
        \begin{tikzpicture}[ auto ,node distance =1.7 cm and 2.3cm, thin, state/.style ={circle ,top color =white , bottom color = white , draw, black, minimum width =0.3 cm}]
        \tikzset{edge/.style = {->,> = latex'}}
            \foreach \i\j in {1/1,1/2,3/2,2/3,4/2,5/3}
                \node[state] (\i\j) at (\i,\j)[minimum size=0.2cm]{};
            \node[state] (t) at (3.9,-1)[minimum size=0.2cm]{$t$};
            \node[state, dashed] (44) at (4,4)[minimum size=0.2cm]{$t$};
            \node[state] (63) at (6,3)[minimum size=0.2cm]{$s$};
            \foreach \i in {63,53,23}
                \draw (t) -- (\i);
            \foreach \i in {32,12}
                \draw (23) -- (\i);
            \foreach \i in {42}
                \draw (53) -- (\i);
            \foreach \i in {12}
                \draw (11) -- (\i);
            \foreach \i in {11,12,32,42}
                \draw (t) -- (\i);
            \draw[blue] (23) -- (53);
            \draw[blue] (63) -- (53);
            \draw[dashed] (44) -- (53);
            \draw[dashed] (44) -- (23);
            \draw[dashed] (44) -- (63);
            \draw [->,> = stealth', dashed] (44) to [out=-120,in=100] (3.8,-0.3);
        \end{tikzpicture}
    }
    \caption{An SP graph on $n = 2^k$ vertices with broadcast time $\ceil{\log n} + 1$.}
    \label{CBT_k_SSP}
\end{figure}

\section{Series-Parallel Broadcast Graphs}

We have established that there exist SP graphs on $n$ vertices for any $n$ with broadcast time at most $\ceil{\log n} + 1$. We have also established Theorem \ref{thrm_nobg2k}, which stated that for $k \geq 3$, there do not exist SP broadcast graphs for $n > 1 + 2^{k-1} + 2^{k-2}$. We now study the following question: what is the largest integer $n$ for which there exists an SP broadcast graph on $n$ vertices?

\subsection{Binomial Series-Parallel Graph} \label{section_BSP_k}

We define a $k$-dimensional \textit{Binomial SP} graph $B_k$ on $2^k+1$ vertices as a $BT_k$ rooted at a vertex $s$ with its $2^k$ vertices connected to an additional vertex $t$ (see Figure \ref{DB_k_SSP_proof}).

\begin{theorem} \label{Bk}
    For every $k$, a $B_k$ is an SP broadcast graph on $2^{k}+1$ vertices.
\end{theorem}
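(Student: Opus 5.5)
We must show two things: that a $B_k$ is SP, and that $b(B_k)=\ceil{\log(2^k+1)}=k+1$ (valid for $k\ge 0$; for $k=0$ the graph is $K_2$, so we may assume $k\geq 1$). The lower bound $b(B_k)\ge k+1$ is immediate from the general bound $\ceil{\log n}\le b(G)$, since $n=2^k+1$.

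\textbf{SP structure.} We argue by induction on $k$, with $s$ and $t$ as the terminals. A $B_1$ is a triangle on $s$, its child, and $t$, which is SP. For $k\ge 2$ we use the $d=k-1$ case of Definition \ref{binodef}\hyperref[binodef]{b}: $BT_k$ is two $BT_{k-1}$'s whose roots $s$ and $r$ are joined by an edge. Hence $B_k$ is obtained from two copies of $B_{k-1}$, with terminals $(s,t)$ and $(r,t)$, by identifying their $t$ terminals and adding the edge $\{s,r\}$. This is realized by SP operations as follows. Start with the triangle on $s,r,t$. The edge $\{s,t\}$ can be expanded into a $B_{k-1}$ with terminals $s,t$, because a $B_{k-1}$ is SP on those terminals and so can be grown from a single edge. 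Similarly, $\{r,t\}$ can be expanded into a $B_{k-1}$ with terminals $r,t$. The edge $\{s,r\}$ is kept. The result is exactly $B_k$, so $B_k$ is SP.

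\textbf{Upper bound $b(B_k)\le k+1$.} We use case analysis on the originator.
\begin{itemize}
\item If the originator is $s$, Observation \ref{obs_binoroottime} informs the $BT_k$ by time $k$, and at time $k+1$ some vertex informs $t$.
\item If the originator is $t$, then $t$ informs $s$ at time 1, and $s$ finishes the $BT_k$ by time $k+1$.
\item If the originator is any other vertex $u$, then $u$ informs $t$ at time 1 and $t$ informs $s$ at time 2. This is too slow if we simply rerun the binomial scheme from $s$, since it would finish at time $k+2$. Instead we use the symmetric structure. By the $d=k-1$ decomposition, $u$ lies in one of two $BT_{k-1}$ halves, rooted at $s$ and at $r$, with $s$ adjacent to $r$. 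At time 1, $u$ informs $t$. At time 2, $t$ informs the root of the half not containing $u$; call this root $x$. That half is a $BT_{k-1}$, and by Observation \ref{obs_binoroottime} it is finished by time $2+(k-1)=k+1$. The half containing $u$ must also be finished by time $k+1$. The plan is recursive: inside that half, with $u$ at depth in its own $BT_{k-1}$, apply the same idea with $t$ as the shared hub. Formally, we prove by induction the stronger claim that from any non-terminal $u$ in $B_k$, all of $B_k$ is informed by time $k+1$ even if $t$ spends only its first sending slot (time 2) helping that half.
\end{itemize}

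\textbf{The main obstacle.} The hard step is the last case. Both halves need $t$'s help at time 2, but $t$ can make only one call per time unit. Our resolution is to use $t$'s calls at times $2,3,\dots$ on successively smaller subtrees. By Definition \ref{binodef}\hyperref[binodef]{a}, $u$'s own subtree decomposition lets $u$ inform its own subroots directly. Meanwhile $t$ informs, at times $2,3,\ldots,k+1$, the roots of the complementary binomial subtrees of dimensions $k-1,k-2,\ldots,0$ along the path from $u$ up to the root. This is arranged so that each subtree of dimension $d$ is informed by time $k+1-d$ and hence finished by time $k+1$. Concretely, we write $BT_k$ as $u$'s own $BT_{d_0}$ plus a chain of disjoint $BT_{d}$'s for the other dimensions $d$, using Observation \ref{obs3}. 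Then $t$ calls their roots in decreasing order of dimension, and $u$ handles its own $BT_{d_0}$ starting at time 2. Verifying that this decomposition has exactly one subtree per dimension, excluding $u$'s own, is the key counting check.
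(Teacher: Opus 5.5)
Your SP argument is correct but takes a different route from the paper. The paper uses the $d=1$ case of Definition~\ref{binodef}(b): it grows $B_k$ from $B_{k-1}$ by applying a parallel extension and then a series extension to every edge $\{v,t\}$, so each tree vertex gets a new pendant neighbor that is also joined to $t$. You use the $d=k-1$ case instead, substituting a copy of $B_{k-1}$ for each of the edges $\{s,t\}$ and $\{r,t\}$ of a triangle, as the paper does for $MB_k$. Equivalently, $B_k$ is the parallel composition of $B_{k-1}$ with the series composition of the edge $\{s,r\}$ and a second $B_{k-1}$, so your version fits the composition definition from the introduction directly. For the broadcasting half, the paper gives no argument and simply cites Hell and Seyffarth, so a self-contained scheme would go beyond the paper. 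Your cases $u=s$ and $u=t$ are fine.

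The general case has a gap exactly at the step you defer as ``the key counting check'', and the decomposition as you phrase it does not exist. Suppose $u$ keeps its own $BT_{d_0}$ and the remaining subtrees have one of each \emph{other} dimension in $\{0,\dots,k-1\}$. Then together they cover $2^{d_0}+(2^k-1-2^{d_0})=2^k-1$ vertices, one short of $2^k$, whatever $d_0$ is. Your other description, with $t$ calling roots of subtrees of every dimension $k-1,\dots,0$ at times $2,\dots,k+1$, is the right one. It works only when $u$'s own part is $\{u\}$ alone and $u$ is idle after time 1. If you want $d_0\ge 1$, the complement has dimensions $d_0,d_0+1,\dots,k-1$, as in the scheme of Theorem~\ref{thrm_ebk}. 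You then also need $2d_0\le k+1$, because $u$ need not be the root of its $BT_{d_0}$ and may need $2d_0-1$ rounds by Observation~\ref{obs2}.

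Your ``stronger claim'' is also too vague to serve as an induction hypothesis. It fixes the deadline at $k+1$ and does not say from which time $t$ is available, so it cannot be applied to the half containing $u$, where $t$ is only free from time 3. A workable formulation is: if some $u\in BT_m$ and $t$ are informed by time $a$, and $t$ makes no calls before time $a+1$, then $B_m$ is fully informed by time $a+m$. For the inductive step, split $BT_m$ into two $BT_{m-1}$'s joined at their roots. At time $a+1$, $t$ calls the root of the half not containing $u$, which is finished by time $a+m$ by Observation~\ref{obs_binoroottime}. The half containing $u$, together with $t$, is a $B_{m-1}$ to which the hypothesis applies with $a+1$ in place of $a$. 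Taking $a=1$ and $m=k$ gives time $k+1$ for every originator $u\neq t$, including $s$.
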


\begin{proof}

We show that a $B_k$ is SP with terminals $s$ and $t$.

For $k = 0$, a $B_k$ is a single edge $\{s, t\}$. For $k \geq 1$, $B_{k}$ is obtained as follows. For every vertex $v$ and edge $e = \{v, t\}$ in a $B_{k-1}$, perform a parallel extension followed by a series extension on $e$. Recalling Definition \ref{binodef}\hyperref[binodef]{b} with $d = 1$, we note that for every vertex in the $BT_{k-1}$, a new adjacent vertex is added and connected to $t$, thus resulting in a $B_k$. 

This graph was introduced and shown to be a planar broadcast graph in \cite{HellPaper}.

\begin{figure}[!ht]
    \centering
    
    \subfigure{
    \resizebox{.15\linewidth}{!}{
    \begin{tikzpicture}[ auto ,node distance =1.7 cm and 2.3cm, thin, state/.style ={circle ,top color =white , bottom color = white , draw, black, minimum width =0.3 cm}, scale=.5]
    \tikzset{edge/.style = {->,> = latex'}}

        \foreach \i\j in {4/1,5/3,7/3}
            \node[state] (\i\j) at (2*\i,\j)[minimum size=0.2cm]{};
        \node[state] (64) at (12,4)[minimum size=0.2cm]{$s$};
        \foreach \i in {73,53}
            \draw (64) -- (\i);
        \foreach \i in {41}
            \draw (53) -- (\i);

        \node[state] (t) at (12,-5)[minimum size=0.2cm]{ $t$};
        
        \foreach \i in {41,53,73,64}
            \draw (t) -- (\i);
        
    \end{tikzpicture}
    }
    }
    \subfigure{
    \resizebox{.15\linewidth}{!}{
    \begin{tikzpicture}[ auto ,node distance =1.7 cm and 2.3cm, thin, state/.style ={circle ,top color =white , bottom color = white , draw, black, minimum width =0.3 cm}, scale=.5]
    \tikzset{edge/.style = {->,> = latex'}}

        \foreach \i\j in {4/1,5/3,7/3}
            \node[state] (\i\j) at (2*\i,\j)[minimum size=0.2cm]{};
        \node[state] (64) at (12,4)[minimum size=0.2cm]{ $s$};
        \foreach \i in {73,53}
            \draw (64) -- (\i);
        \foreach \i in {41}
            \draw (53) -- (\i);

        \node[state] (t) at (12,-5)[minimum size=0.2cm]{ $t$};
        
        \foreach \i in {41,64}
            \draw (\i) to [bend right=6]  (t);
         \foreach \i in {41,64}
            \draw (\i) to [bend left=6]  (t);
        \foreach \i in {53,73}
            \draw (\i) to [bend left=6]  (t);
         \foreach \i in {53,73}
            \draw (\i) to [bend right=6]  (t);
    \end{tikzpicture}
    }
    }
    \subfigure{
    \resizebox{.3\linewidth}{!}{
    
    \begin{tikzpicture}[ auto ,node distance =2 cm and 2.3cm, thin, state/.style ={circle ,top color =white , bottom color = white , draw, black, minimum width =0.3 cm}]
    \tikzset{edge/.style = {->,> = latex'}}
         \foreach \i\j in {1/1,1/2,3/2,2/3,4/2,5/3,7/3}
            \node[state] (\i\j) at (\i,\j)[minimum size=0.2cm]{};
        \node[state] (64) at (6,4)[minimum size=0.2cm]{$s$};
        \foreach \i in {73,53,23}
            \draw (64) -- (\i);
        \foreach \i in {32,12}
            \draw (23) -- (\i);
        \foreach \i in {42}
            \draw (53) -- (\i);
        \foreach \i in {12}
            \draw (11) -- (\i);

        \node[state] (t) at (3.9,-1)[minimum size=0.2cm]{$t$};
        
        \foreach \i in {11,12,32,23,42,53,73,64}
            \draw (t) -- (\i);
    \end{tikzpicture}
    }
    }
    \caption{A Binomial SP graph $B_3$ on $2^3 + 1$ vertices generated from a $B_2$ using a sequence of SP operations.}
    \label{DB_k_SSP_proof}
\end{figure}
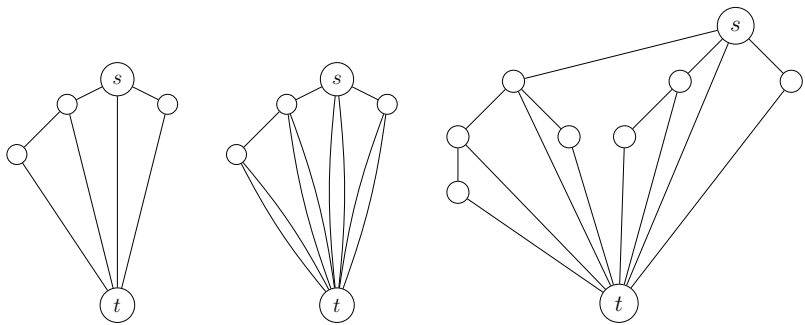
\end{proof}

\subsection{Series-Parallel Broadcast Graph of Larger Size} \label{SP_construction}

We now introduce an SP broadcast graph on $2^{k-1} + 2^{\floor{\frac{k}{2}}}$ vertices, for any $k$. A $k$-dimensional \textit{Extended-Binomial SP} graph $EB_k$ is an SP graph obtained from the parallel composition of SP graphs $G_1 = B_{k-1}$ and $G_2 = B_{\floor{\frac{k}{2}}}$ (see the last graph in Figure \hyperref[fig1.2]{1.2} for $EB_4$).

\begin{theorem} \label{thrm_ebk}
{
There exists an SP broadcast graph on $n$ vertices with $2^{k-1} + 1 \leq n \leq 2^{k-1} + 2^{\floor{\frac{k}{2}}}$, for any $k$.
}
\end{theorem}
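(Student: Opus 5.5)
The plan is to take $EB_k$, the parallel composition of $B_{k-1}$ and $B_{\floor{\frac{k}{2}}}$, and prune it to the required size. Write $m=\floor{\frac{k}{2}}$. Merging the two $s$ terminals and the two $t$ terminals gives one vertex $s$ that is the common root of a large tree $T_L\cong BT_{k-1}$ and a small tree $T_S\cong BT_m$. Every tree vertex is adjacent to a single vertex $t$. The vertex count is $(2^{k-1}+1)+(2^m+1)-2=2^{k-1}+2^m$.

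\textbf{Step 1: intermediate sizes and the SP property.} For $2^{k-1}+1\le n\le 2^{k-1}+2^m$, I would delete $2^{k-1}+2^m-n$ vertices of $T_S\setminus\{s\}$, always removing a current leaf of $T_S$. This leaves a subtree of $T_S$ containing $s$. The resulting graph is still SP. To see this, take any tree $T$ rooted at $s$ together with a vertex $t$ joined to all of $T$. Adding a leaf $\ell$ with parent $p$ amounts to a parallel extension on $\{p,t\}$ followed by a series extension on the new copy. So, by induction on $|T|$ exactly as in the proof of Theorem~\ref{Bk}, such a graph is SP with terminals $s,t$. Since $\ceil{\log n}=k$ throughout this range, it suffices to show broadcast time at most $k$ from every originator. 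Deleting leaves of $T_S$ only removes work, so I would prove this for the full $EB_k$ and note that the same schemes, with calls to deleted vertices skipped, still work after pruning.

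\textbf{Step 2: originators $s$ and $t$.} If the originator is $s$, then $s$ calls $t$ at time $1$ and runs Broadcast Scheme~\ref{binoscheme} on $T_L$ during times $2,\dots,k$. Meanwhile $t$ calls the subroots of $T_S$ of dimensions $m-1,m-2,\dots,0$ at times $2,3,\dots$. Each subroot then broadcasts in its own binomial subtree. The subroot called at time $i+1$ has dimension $m-i$, so everything finishes by time $m+1\le k$. If the originator is $t$, the same scheme works with $s$ and $t$ swapping roles at time $1$.

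\textbf{Step 3: an originator $v\neq s$ in a tree (main obstacle).} If $v\in T_S$, the subtree is tiny: $v$ calls $t$, $t$ calls $s$ at time $2$, and I would aim to get $s$ to start $T_L$ early enough. This may require $v$ to call $s$ first when $v$ is close to $s$. A cleaner route is to have $t$ call the dimension-$(k-2)$ subroot of $T_L$ while the path to $s$ proceeds in parallel, and I would need to check that the timing works. The harder case is $v\in T_L$. There I would follow the broadcast scheme showing $B_{k-1}$ is a broadcast graph (the Hell--Seyffarth scheme): $v$ calls $t$ at time $1$. Then, using Definition~\ref{binodef}\hyperref[binodef]{b} and Observation~\ref{obs3}, both $v$ and $t$ inform roots of the appended binomial subtrees so that $T_L$ is covered by time $k$. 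The extra task is to fit $T_S$ into this schedule. Because $t$ is adjacent to all of $T_S$ and $T_S$ needs only $m+1\le \ceil{\frac{k}{2}}+1$ rounds once seeded, it is enough to reserve one call of $t$ (or of $s$) at some time $\le k-m$ to seed $T_S$.

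My plan is to show that the $B_{k-1}$ scheme leaves such slack. Roughly $2^{k-1}+1$ vertices must be informed in $k$ rounds, whereas the doubling capacity is $2^k$, so a single call made around time $k-m$ costs at most about $2^{m}$ of that capacity, which is spare. Concretely, I would use the root/subroot structure: $t$'s last calls target small subtrees ($BT_0,BT_1,\dots$). I would replace the call to the $BT_{m}$-sized piece scheduled near time $k-m$ by a call into $T_S$, and cover the displaced piece by its neighbour in $T_L$ one round later. Verifying this exchange uniformly over all positions of $v$, with the case split according to the dimension of the subtree of $T_L$ containing $v$, is the step I expect to be the main difficulty.
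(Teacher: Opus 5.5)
\textbf{There is a genuine gap: Step 3, which is the heart of the theorem, is not proved.} Your Steps 1 and 2 are fine. Your leaf-addition argument for why the pruned graph stays SP is more explicit than the paper's, and the originators $s$ and $t$ are handled correctly.

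For $v\in T_S$ you leave the timing unchecked, and the ideas you sketch do not work in general:
\begin{itemize}
\item Once $\mathrm{dist}(v,s)\ge 2$, $s$ is informed at time $2$ or later. It therefore cannot cover $T_L$ by itself before time $k+1$.
\item In your ``cleaner route,'' $s$ must still call the subroots of dimensions $k-3,\dots,0$ at times $3,\dots,k$, so it must be informed by time $2$. With $t$ busy at time $2$, that is impossible unless $v$ is adjacent to $s$.
\end{itemize}
For $v\in T_L$, comparing $2^{k-1}+2^m$ with the doubling capacity $2^k$ does not yield a schedule. The proposed exchange also fails as stated: covering the displaced $BT_m$-sized piece ``one round later'' makes that piece finish at time $k+1$ unless it is re-split, and you do not re-split it.

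\textbf{The missing idea is a single uniform scheme, with no exchange argument.} This is what the paper does.
\begin{enumerate}
\item By Observation~\ref{obs3} (Definition b with $d=m$), every tree vertex $v$ lies in some $BT_m$ subtree $S_v$; take $S_v=T_S$ if $v\in T_S$.
\item Let $v$ call $t$ at time $1$ and then broadcast inside $S_v$. By Observation~\ref{obs2} this ends by time $1+(2m-1)=2m\le k$.
\item The rest of $EB_k\setminus\{t\}$ partitions into connected binomial subtrees of dimensions $k-2,\dots,0$, one of each. To see this, use the fact that for any vertex $x$, $BT_j\setminus\{x\}$ splits into $BT_{j-1},\dots,BT_0$; this follows by induction, viewing $BT_j$ as two $BT_{j-1}$'s joined at their roots.
\item Apply that fact to the skeleton $BT_{k-1-m}$ of $T_L$, removing the skeleton vertex that carries $S_v$. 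Reattaching the appended $BT_m$'s gives pieces of dimensions $k-2,\dots,m$. The set $T_S\setminus\{s\}$ supplies the pieces of dimensions $m-1,\dots,0$. If $v\in T_S$, the pieces are simply the subtrees of the subroots of $T_L$.
\item Since $t$ is universal, it calls the root of the dimension-$(k-i)$ piece at time $i$ for $2\le i\le k$. By Observation~\ref{obs_binoroottime} each piece is finished by time $k$.
\end{enumerate}
The count $1+2^m+(2^{k-1}-1)=2^{k-1}+2^m$ confirms that every vertex is covered. The same scheme handles $s$ and, after $t$ first calls an arbitrary vertex, $t$ as well, so your case analysis is unnecessary.
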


\begin{proof}
We show that an $EB_k$ on $n = 2^{k-1} + 2^{\floor{\frac{k}{2}}}$ vertices is an SP broadcast graph.

An $EB_k$ is SP by definition. Recall that the broadcast time of any graph on $n$ vertices is at least $\ceil{\log n}$. We show that $b(EB_k) = \ceil{\log n} = k$ by providing a broadcast scheme for any vertex.

If $t$ is the originator, it informs some arbitrary vertex $v$. Otherwise, the originator, then denoted $v$, first informs $t$. 
Beginning at time unit 2, vertex $v$ informs the $BT_{\floor{\frac{k}{2}}}$ subtree that it is contained in (by Observation \ref{obs3}) and finishes doing so by time $k$ (by Observation \ref{obs2}). Note that the $BT_{\floor{\frac{k}{2}}}$ subtree that $v$ is contained in may not necessarily be $G_2 \setminus \{t\}$, as it may instead be a subtree of $G_1 \setminus \{t\}$.

At every time unit $i$ ($2\leq i\leq k$), vertex $t$ informs $r_i$, the root of the $BT_{k-i}$ subtree which does not contain $v$ (see Figure \ref{f2}). Once informed, every $r_i$ starts conducting Broadcast Scheme \ref{binoscheme} to complete broadcasting in its $BT_{k-i}$ subtree by time $k$. The total number of vertices informed by time $k$ is then $1 + 2^{\floor{\frac{k}{2}}} + \sum\limits_{i=0}^{k-2} 2^{k-i} = 2^{k-1} + 2^{\floor{\frac{k}{2}}}$, and this concludes the broadcast scheme.

To obtain a SP broadcast graph on $n$ vertices with $2^{k-1} < n < 2^{k-1} + 2^{\floor{\frac{k}{2}}}$, we can remove any number of vertices from $G_2 \setminus\{s, t\}$ starting at the leaves without increasing the broadcast time of the graph.
\end{proof}

\begin{figure}[ht!]
\centering
\resizebox{10cm}{!}{
\begin {tikzpicture}[ auto ,node distance =1.7 cm and 2.3cm,, thin, state/.style ={circle ,top color =white , bottom color = white , draw, black, minimum width =0.3 cm}]
\tikzset{edge/.style = {->,> = latex'}}
\draw [dashed] plot coordinates {(-1,5.7) (-2.3,5) (-3.7,2.7) (-0.3,3) (-1,5.7) };
\node (spu) at (-2.7,5.3) {$BT_{k-2}$};
\node (spu) at (-1.25, 3.8) {\footnotesize $BT_{k-3}$};
\node (bttt) at (1.93,5.55) {\footnotesize$BT_{\lfloor\frac{k}{2}\rfloor}$};
\node[state] (sp) at (-1.2,5.3)[minimum size=0.2cm] {};
\node[state] (spp) at (-2.25,4.6)[minimum size=0.2cm] {};

\node[fill,circle,scale=0.25, color=red] (v) at (-2.8,3) {$v$};
\node[state,scale=0.5] (v2) at (-2.58,3.37) {};
\node[state,scale=0.5] (v3) at (-2.4,4) {};
\node (ts) at (1,4.5) {$BT_{k-2}$};
\node[state,scale=0.6] (s) at (1,6)[minimum size=0.10cm] {$s$};

\node[state,scale=0.6] (t) at (-1,1)[minimum size=0.10cm] {$t$};

\node[text=blue] (1) at (-1.8,1.7) {$1$};
\node[text=blue] (2) at (0.1,5.4) {$2$};
\node[text=blue] (2u) at (-2.52,3.15) {\tiny $2$};
\node[text=blue] (3) at (-0.5,5) {$3$};
\draw(spp) -- (-3.4,3) -- (-2.2,3) -- (spp);
\draw[red, line width =0.8pt] (-3.08,3)-- (-2.42,4)-- (-2.38,4)-- (-2.2,3)-- (-3.08,3);
\node[state, scale=0.5] (v3) at (-2.4,4) {};
\draw(sp) -- (-1.9,3.5) -- (-0.7,3.5) -- (sp);
\draw(s) -- (0,4) -- (2,4) -- (s);
\draw(s) -- (2,5) -- (2.7,5.7) -- (s);
\foreach \i in {0,2,...,12}
    \pgfmathsetmacro{\y}{\i/10}
    \pgfmathsetmacro{\a}{\y-3.4}
    \draw[color=darkgray, out = 180-\i, in = -90] (t) to (\a,3);
 \foreach \i in {0,2,...,20}
    \pgfmathsetmacro{\y}{\i/10}
    \pgfmathsetmacro{\a}{2-\y}
    \draw[color=darkgray, out = 10+\i, in = -90] (t) to (\a,4);
 \foreach \i in {0,2,...,12}
    \pgfmathsetmacro{\y}{\i/10}
    \pgfmathsetmacro{\a}{-1.9+\y}
    \draw[color= darkgray, out = 100-\i, in = -90] (t) to (\a,3.5);
\foreach \i in {0,2,...,14}
    \pgfmathsetmacro{\y}{\i/20}
    \pgfmathsetmacro{\a}{2+\y}
    \pgfmathsetmacro{\b}{5+\y}
    \draw[color=darkgray] (t) .. controls (\a-1.5+\y,\b-4-\y) and (\a+1.5,\b-2).. (\a,\b);


\draw[] (sp) to (s);
\draw[] (sp) to (spp);
\tikzset{edge/.style = {->,> = latex'}}

\draw[edge, color=blue, in = 180-8, out = -90] (-2.8,3) to (t);
\draw [edge, color=blue] (t) .. controls (0.5,2) and (-0.8,4.5).. (s);
\draw [edge, color=blue] (t) .. controls (0,2) and (0,4).. (sp);
\draw[edge, color=blue] (v) to (v2);
\draw[edge, color=blue, dashed] (v2) to (v3);
\node[fill,circle,scale=0.25, color=red] (v) at (-2.8,3) {$v$};
\end{tikzpicture}
}
\caption{Beginning of the broadcast scheme for a vertex $v$ (in red) in an Extended-Binomial SP graph $EB_k$. At time 2, the originator begins informing the  $BT_{\floor{\frac{k}{2}}}$ subtree that it is contained in (the red triangle). Meanwhile at each time $i$, vertex $t$ informs $r_{k-i}$, the root of the $BT_{k-i}$ subtree that does not contain $v$.}
\label{f2}
\end{figure}
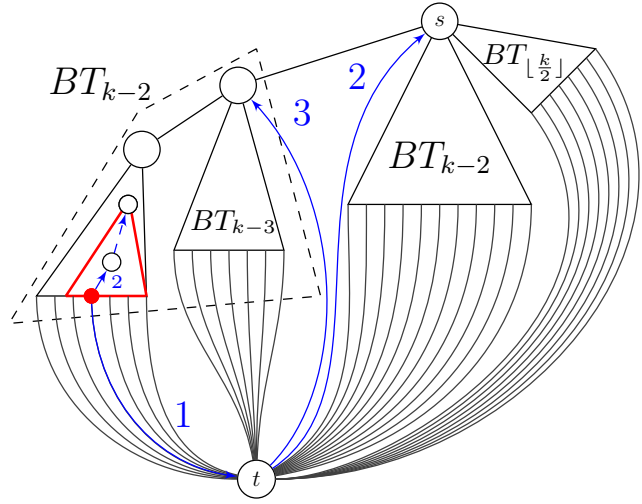

\section{Planar Broadcast Graphs}

The maximum number of vertices in a planar broadcast graph is an open question, yet some bounds are known. It is common knowledge that any planar graph contains a vertex of degree at most 5. Using this fact and a similar argument to the one in the proof of Theorem \ref{thrm_nobg2k}, Hell and Seyffarth \cite{HellPaper} showed that the maximum number of vertices in a planar broadcast graph is at most $1 + 2^{k-1} + 2^{k-2} + 2^{k-3} + 2^{k-4} + 2^{k-5}$. For a lower bound, they presented planar broadcast graphs on up to $2^{k-1} + 2^{\floor{\frac{2k}{3}}} + 1$ vertices with broadcast time $k$, for any $k$. To the best of our knowledge, no planar broadcast graph on a greater number of vertices is known. In this section, we present planar broadcast graphs on up to $2^{k-1} + 2^{\floor{\frac{3k}{4}}-1}$ vertices with broadcast time $k$, for any $k$. To do so, we modify the SP broadcast graph $EB_k$ presented in Section \ref{SP_construction} to obtain the desired graph, which is no longer SP but is still planar.

We first present a graph that is useful for the modification.
A $k$-dimensional \textit{Accelerated-Binomial SP} graph $AB_k$ on $2^k + 1$ vertices is a $B_k$ with added edges.  
An $AB_k$ is obtained using the construction steps of a $B_k$ outlined in Section \ref{section_BSP_k} with the following additional steps. Upon starting the construction with edge $\{s, t\}$, vertex $s$ is labelled $0$ to indicate its distance to itself. 
Recall that at every step $i \geq 1$ in the construction of a $B_k$, every edge incident to $t$ receives a parallel extension and a series extension resulting in a new vertex. For every such new vertex $v$ at step $i$:

\begin{itemize}

\item Connect $v$ to its ancestor labelled $\ceil{\frac{i}{3}} - 1$ in the $BT_i$ subtree rooted at $s$, if they are not already adjacent. We refer to such an added edge as a \textit{shortcut edge}.
\item Label $v$ with $\ceil{\frac{i}{3}}$ to indicate its distance to vertex $s$.
\end{itemize}

\begin{lemma} \label {abk_planar_lemma}
An $AB_k$ is planar. For $k\geq 2$, it is not SP.
\end{lemma}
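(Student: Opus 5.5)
Planarity will come from an explicit embedding built alongside the recursive construction, and non-SP-ness will come from exhibiting a $K_4$ minor in $AB_2$ that persists in every $AB_k$ ($k \ge 2$).

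\textbf{Planarity.} I maintain, by induction on the construction step $i$, a planar embedding of the graph built so far with two properties. First, $t$ lies on the outer face. Second, for every edge $\{v,t\}$ and every ancestor $a$ of $v$ in the $BT_i$ rooted at $s$, some face contains $v$, $t$ and $a$ together.

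The natural embedding is the $B_k$ one: the binomial tree drawn as a plane tree, and $t$ placed outside so that its edges reach every tree vertex. Each face is then bounded by $t$ and a walk in the tree between consecutive (in planar order) tree vertices.

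At step $i$, the new vertex $v'$ is placed inside the face containing the edge $\{v,t\}$, adjacent to $v$ and to $t$. This splits the face into triangle-like regions. The shortcut edge from $v'$ to an ancestor $a$ of $v$ (labelled $\ceil{\frac{i}{3}}-1$) must be drawable without crossings. The key observation is that the path from $v$ up to the root $s$, together with the edges $\{s,t\}$ and $\{v,t\}$, bounds a region. I place $v'$ on the side of $v$ whose face boundary contains the whole ancestor path of $v$; this is possible because, in the plane tree, the boundary of one of the faces at $\{v,t\}$ walks from $v$ along its ancestors.

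I expect the main obstacle to be showing that earlier shortcut edges do not block this face. The fix I would try is to draw all children of a vertex on one side (say the left), so that the ancestor path of every vertex stays on the boundary of the face to its right. A shortcut then always goes from a new leaf to a vertex on that path, and it subdivides the face in a nested, non-crossing way. I would check this invariant carefully and, if it fails, add to it the statement that shortcut edges from a vertex's descendants only attach to ancestors along the same path.

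\textbf{Not SP.} For $k = 2$, I compute $AB_2$ explicitly.
\begin{itemize}
\item Step 1 adds $v_1$, adjacent to $s$ and $t$, and labels it $1$.
\item Step 2 adds $a$, a child of $s$ (adjacent to $s$ and $t$), and $b$, a child of $v_1$ (adjacent to $v_1$ and $t$).
\item Since $\ceil{\frac{2}{3}}-1=0$, the vertex $b$ receives a shortcut edge to $s$.
\end{itemize}
Then $s, v_1, b, t$ are pairwise adjacent, so they form a $K_4$.

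SP graphs (simple graphs obtained by series and parallel extensions) are exactly graphs with no $K_4$ minor. This is a classical characterisation; if it should not be assumed, one can argue directly instead. SP operations on $K_2$ never create a $K_4$ subgraph: deleting a degree-2 vertex and contracting one of its edges reverses a series extension and preserves a $K_4$ minor, and a $K_4$ has no degree-2 vertex.

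Finally, $AB_k$ contains $AB_2$ as a subgraph for $k\ge2$, because construction steps only add vertices and edges. Hence $AB_k$ contains $K_4$ and is not SP.
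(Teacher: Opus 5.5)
\textbf{Non-SP half.} This part is correct and is exactly the paper's argument. The vertices $\{s,v_1,b,t\}$ span a $K_4$ in $AB_2$, this $K_4$ survives as a subgraph of every $AB_k$ with $k\ge 2$, and SP graphs have no $K_4$ minor.

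\textbf{Planarity half: the invariant is false.} The gap here is more than an unchecked detail. The invariant you want to maintain fails in \emph{every} embedding once $i=3$. Let $x$ be the step-$3$ child of $b$. Both $b$ and $x$ get shortcuts to $s$, so $\{s,t,v_1,b,x\}$ induces $K_5$ minus the edge $v_1x$. That graph has $9=3\cdot 5-6$ edges, so all faces of any plane embedding of it are triangles. Since $v_1$ and $x$ are non-adjacent, they share no face, and embedding the rest of the graph only subdivides faces. So no face contains $x$, $t$ and the ancestor $v_1$ of $x$. No convention such as drawing all children on one side can repair this.

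Your own insertion step shows the mechanism. Placing $v'$ adjacent to $v$, $t$ and a non-parent ancestor $a$ inside a face bounded by $t,v,\dots,a,\dots,s,t$ leaves the ancestors strictly between $v$ and $a$ on no face with $v'$ and $t$.

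Worse, planarity genuinely depends on \emph{which} ancestor receives the shortcut. At step $4$ a new child $y$ of $x$ has three ancestors labelled $1$ (namely $x$, $b$, $v_1$). Joining $y$ to $v_1$ would create a $K_5$ minor: contract $xy$. An argument that treats the target as an arbitrary ancestor with the right label therefore cannot work.

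\textbf{What is missing.} You need the recursive structure that the paper's proof uses.
For $i=3m+j$ with $1\le j\le 3$, a vertex created at step $i$ is joined to its nearest ancestor that already existed after step $3m$. The vertices created in steps $3m+1,\dots,3m+j$ below a vertex $v$ of $AB_{3m}$ form a $BT_j$ rooted at $v$. Hence $AB_{3m+j}$ is $AB_{3m}$ with a copy of $AB_j$ glued along every edge $\{v,t\}$, with the copy's $s$ identified with $v$ and its $t$ with $t$.

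Gluing a planar graph along an edge preserves planarity. Embed the copy with that edge on its outer face, and draw it inside a face incident to $\{v,t\}$, close to that edge. Planarity then follows by induction from explicit embeddings of $AB_1$, $AB_2$ and $AB_3$, with no face invariant needed.
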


\begin{proof}

    We prove an $AB_k$'s planarity by induction on $k$. For an $AB_k$ with $k \leq 3$, the graph has a planar embedding, as shown in Figure \ref{AB_3}.

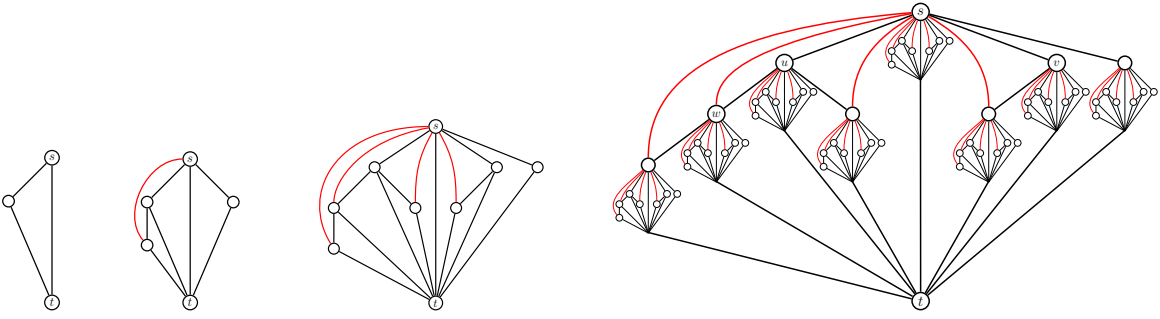
\begin{figure}[!ht]
    \centering
    \subfigure{
    \resizebox{.05\linewidth}{!}{

    \begin{tikzpicture}[ auto ,node distance =1.7 cm and 2.3cm, thin, state/.style ={circle ,top color =white , bottom color = white , draw, black,line width=2.5pt, minimum width =5pt}]
    \tikzset{edge/.style = {->,> = latex'}}
        \node[state] (12) at (-7,6)[minimum size=0.8cm]{};
        \node[state] (23) at (-4,9)[minimum size=1cm]{\Huge$s$};
        \draw [line width=2.5pt] (12) -- (23);      
        \node[state] (t) at (-4,-1)[minimum size=1cm]{\Huge$t$};
        \draw [-, line width=2.5pt, black](t)--(12);
        \draw [-, line width=2.5pt, black](t)--(23);
    \end{tikzpicture}
    }
    }
    ~
    \subfigure{
    \resizebox{.1\linewidth}{!}{

    \begin{tikzpicture}[ auto ,node distance =1.7 cm and 2.3cm, thin, state/.style ={circle ,top color =white , bottom color = white , draw, black,line width=2.5pt, minimum width =5pt}]
    \tikzset{edge/.style = {->,> = latex'}}
        \foreach \i\j in {1/1,1/2,3/2}
            \node[state] (\i\j) at (3*\i,3*\j)[minimum size=0.8cm]{};
        \node[state] (23) at (6,9)[minimum size=1cm]{\Huge$s$};
        \foreach \i in {32,12}
            \draw [line width=2.5pt] (23) -- (\i);
        \foreach \i in {12}
            \draw [line width=2.5pt] (11) -- (\i);      
        \node[state] (t) at (6,-1)[minimum size=1cm]{\Huge$t$};

        \foreach \i in {11,12,32,23}
            \draw [-, line width=2.5pt, black](t)--(\i);
        \draw [line width=2.5pt, color=red] (11) to [edge,  out=130, in =-180] (23);

    \end{tikzpicture}
    }
    }
    ~
    \subfigure{
    \resizebox{.2\linewidth}{!}{

    \begin{tikzpicture}[ auto ,node distance =1.7 cm and 2.3cm, thin, state/.style ={circle ,top color =white , bottom color = white , draw, black,line width=2.5pt, minimum width =5pt}]
    \tikzset{edge/.style = {->,> = latex'}}

        \foreach \i\j in {1/1,3/2,4/2,6/3}
            \node[state] (\i\j) at (3*\i,3*\j)[minimum size=0.8cm]{};
        \node[state] (64) at (10.5,12)[minimum size=1cm]{\Huge$s$};
        \node[state] (12) at (3,6)[minimum size=0.8cm]{};
        \node[state] (23) at (6,9)[minimum size=0.8cm]{};
        \node[state] (53) at (15,9)[minimum size=0.8cm]{};

        \foreach \i in {63,53,23}
            \draw [line width=2.5pt] (64) -- (\i);
        \foreach \i in {32,12}
            \draw [line width=2.5pt](23) -- (\i);
        \foreach \i in {42}
            \draw[line width=2.5pt] (53) -- (\i);
        \foreach \i in {12}
            \draw[line width=2.5pt] (11) -- (\i);
            
        \node[state] (t) at (10.5,-1)[minimum size=1cm]{\Huge$t$};

        \foreach \i in {11,12,32,23,42}
            \draw [-, line width=2.5pt, black](t)--(\i);
        \draw [line width=2.5pt, black] (t) to (53); 
        \draw [-, line width=2.5pt, black] (t) to (64);
        \draw [-, line width=2.5pt, black] (t) to (63);
        \draw [line width=2.5pt, color=red] (11) .. controls (0.5,7) and (3, 12).. (64);
        \draw [line width=2.5pt, color=red] (12) .. controls (3,8) and (5, 11).. (64);
        \draw [line width=2.5pt, color=red] (32) to [edge,  out=90, in =-120] (64);
        \draw [line width=2.5pt, color=red] (42) to [edge,  out=90, in=-60] (64);

    \end{tikzpicture}
    }
}
    ~
    \subfigure{
    \resizebox{.45\linewidth}{!}{

    \begin{tikzpicture}[ auto ,node distance =1.7 cm and 2.3cm,line width=1pt, state/.style ={circle ,top color =white , bottom color = white , draw, black, minimum width =1 cm}]
    \tikzset{edge/.style = {->,> = latex'}}

        \node[state, line width=2.5pt] (32) at (12,6)[minimum size=0.8cm]{};
        \node[state, line width=2.5pt] (52) at (20,6)[minimum size=0.8cm]{};
        \node[state, line width=2.5pt] (73) at (28,9)[minimum size=0.8cm]{};
        \node[state, line width=2.5pt] (44) at (16,12)[minimum size=1cm]{\Huge$s$}; 
        \node[state, line width=2.5pt] (12) at (4,6)[minimum size=1cm]{\huge$w$};
        \node[state, line width=2.5pt] (23) at (8,9)[minimum size=1cm]{\huge$u$};
        \node[state, line width=2.5pt] (63) at (24,9)[minimum size=1cm]{\huge$v$};
        \node[state, line width=2.5pt] (01) at (0,3)[minimum size=0.8cm]{};

        \foreach \i in {73,63,23}
            \draw [line width=2.5pt] (44) -- (\i);
        \foreach \i in {32,12}
            \draw [line width=2.5pt](23) -- (\i);
        \foreach \i in {52}
            \draw[line width=2.5pt] (63) -- (\i);
        \foreach \i in {12}
            \draw[line width=2.5pt] (01) -- (\i);

        \node[state,line width=2.5pt] (t) at (16,-5)[minimum size=1cm]{\Huge$t$};

        \foreach \k\p in {1/2,0/1}
        {
            \coordinate[draw=none] (inv\k\p) at (4*\k, 3*\p-4)[minimum size=0cm]{};
            \draw [-{Round Cap []}, line width=2.5pt, black,shorten >=-0.5mm](t) to (4*\k-0.01, 3*\p-3.965);
            }
        \foreach \k\p in { 3/2, 2/3, 4/4, 5/2, 6/3,7/3}
        {
            \coordinate[draw=none] (inv\k\p) at (4*\k, 3*\p-4)[minimum size=0cm]{};
            \draw [-{Round Cap []}, line width=2.5pt, black,shorten >=-0.5mm](t) to (4*\k-0.01, 3*\p-3.975);
            }
        \foreach \k\p in {3/2, 1/2, 2/3, 0/1,4/4, 5/2, 6/3,7/3}
        {
            \foreach \i\j in {1/2,3/2,2/3}
                \node[state] (\k\p\i\j) at (4*\k-2.3+0.6*\i,3*\p-3.5+0.6*\j)[minimum size=0.1cm]{};
            \foreach \i\j in {4/2,5/3,6/3}
                \node[state] (\k\p\i\j) at (4*\k-1.9+0.6*\i,3*\p-3.5+0.6*\j)[minimum size=0.1cm]{};
            \node[state] (\k\p10) at (4*\k-1.7,3*\p-3.1)[minimum size=0.1cm]{};
        \foreach \i in {\k\p63,\k\p53,\k\p23}
            \draw [] (\k\p) -- (\i);
        \foreach \i in {\k\p32,\k\p12}
            \draw [] (\k\p23) -- (\i);
        \foreach \i in {\k\p42}
            \draw [] (\k\p53) -- (\i);
        \foreach \i in {\k\p12}
            \draw [] (\k\p10) -- (\i);
        \draw [color=red] (\k\p10) to  [edge,out=130, in=-137] (\k\p);
        \draw [color=red] (\k\p12) to[edge,out=70, in=-130] (\k\p);
        \draw [color=red] (\k\p32) to [edge,  out=90, in =-110] (\k\p);
        \draw [color=red] (\k\p42) to [edge,  out=90, in =-70] (\k\p);

        \foreach \i in {\k\p10,\k\p12,\k\p32,\k\p23,\k\p53,\k\p63, \k\p,\k\p42}
            \draw [-, line width=1pt, black](inv\k\p) -- (\i);
        }
        
        \draw [line width=2.5pt, color=red] (01) .. controls (0,8) and (4, 11).. (44);
        \draw [line width=2.5pt, color=red] (12) .. controls (4,8) and (5, 10).. (44);
        \draw [line width=2.5pt, color=red] (32) to [edge,  out=90, in =-150] (44);
        \draw [line width=2.5pt, color=red] (52) to [edge,  out=90, in=-30] (44);
    \end{tikzpicture}
    }
    }
    
    \caption{Planar embeddings of Accelerated-Binomial SP graphs $AB_k$ for $k \in \{1, 2, 3, 6\}$. An $AB_k$ is a $B_k$ on $2^k + 1$ vertices with added shortcut edges (in red) such that every vertex in an $AB_k \setminus \{t\}$ is at distance at most $\ceil{\frac{k}{3}}$ from $s$.}
    \label{AB_3} 
    \end{figure}

    Assume that an $AB_{3m}$ is planar for some $m \geq 1$. Then, an $AB_{3m+j}$ with $j \in \{1, 2, 3\}$ is obtained as follows. For every vertex $v$ in the $AB_{3m}$, replace any edge $\{v, t\}$ with an $AB_j$ (see Figure \ref{AB_3}). The resulting graph is an $AB_{3m+j}$ and is planar, as shown by the planar embedding. 
    
    To show that a graph is not SP it suffices to show that it contains $K_4$ as a minor \cite{duffin}. An $AB_k$ ($k \geq 2$) is therefore not SP as it contains $K_4$ as a subgraph (see the subgraph induced by vertex set $\{s, u, w, t\}$ in Figure \ref{AB_3}).
\end{proof}

\begin{lemma}
\label{btimeABk}
    The broadcast time of an $AB_k \setminus \{t\}$ is at most $\ceil{\frac{4k}{3}}$.
\end{lemma}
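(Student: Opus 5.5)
The plan is to show that every vertex $u$ of an $AB_k \setminus \{t\}$ can reach $s$ within $\ceil{\frac{k}{3}}$ time units, and that $s$ can then inform all of $BT_k$ in $k$ time units. Because broadcasting from $u$ must also cover vertices informed while the message travels toward $s$, I will use a scheme in which the message moves along a shortest path to $s$ and every vertex on that path starts its own binomial broadcast afterwards. The bound I am aiming for is $\ceil{\frac{k}{3}} + k \le \ceil{\frac{4k}{3}}$, and this is an equality because $k$ is an integer.

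\textbf{Step 1 (distance to $s$).} By induction on the construction step $i$, every vertex created at step $i$ is at distance at most $\ceil{\frac{i}{3}}$ from $s$ in $AB_k\setminus\{t\}$. This is exactly the meaning of its label. When a vertex $v$ is created at step $i$, it receives a shortcut edge to its ancestor labelled $\ceil{\frac{i}{3}}-1$, unless they are already adjacent. In the latter case $v$ is adjacent to a vertex whose label is even smaller.

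I have to check that such an ancestor exists. The parent $p$ of $v$ was created at some step $i' < i$, so its label $\ceil{\frac{i'}{3}}$ is at most $\ceil{\frac{i}{3}}$. Labels along the tree path from $s$ increase by at most one per step. Every label value between $0$ and the parent's label should therefore occur among $v$'s ancestors. I expect this to be the main obstacle: the shortcut targets are defined through labels, and I must verify that the ancestor chain of $v$ (in the $BT_i$ rooted at $s$) contains a vertex with label exactly $\ceil{\frac{i}{3}}-1$. Since $\ceil{\frac{i}{3}}$ is nondecreasing in $i$ and increases by at most one, an induction on $i$ that maintains ``the labels along any root-to-vertex tree path are nondecreasing and take every intermediate value'' should settle it. Steps $1,\dots,k$ cover all vertices, so every vertex is within $\ceil{\frac{k}{3}}$ of $s$.

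\textbf{Step 2 (scheme).} Let $u$ be the originator and let $u=x_0,x_1,\dots,x_d=s$ be a shortest path, with $d\le\ceil{\frac{k}{3}}$. At time $j$, vertex $x_{j-1}$ informs $x_j$, so $s$ is informed at time $d$. From time $d+1$ on, $s$ runs Broadcast Scheme \ref{binoscheme} on the $BT_k$ and finishes by time $d+k$, using Observation \ref{obs_binoroottime}. The vertices $x_j$ that were informed earlier cause no conflict, because the scheme only ever sends the message to uninformed vertices: any call from the binomial scheme aimed at an already-informed vertex can be skipped or redirected without delaying anyone, since in Broadcast Scheme \ref{binoscheme} each vertex is informed by its tree parent at a fixed time. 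Every vertex of $BT_k$ is therefore informed by time $d+k\le\ceil{\frac{k}{3}}+k=\ceil{\frac{4k}{3}}$.

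Edges to $t$ are not used, consistent with the graph being $AB_k\setminus\{t\}$. The only genuinely delicate point is the label-consistency claim in Step 1, which I would check first on the small cases in Figure \ref{AB_3} before running the induction.
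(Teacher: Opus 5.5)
Your strategy matches the paper's: reach $s$ along a path of length at most $\ceil{\frac{k}{3}}$, then run Broadcast Scheme \ref{binoscheme} from $s$. Step 2 is sound, since every call of the binomial scheme happens after time $d$, when the path vertices are idle.

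The trouble is in Step 1, exactly where you placed the difficulty: the invariant you propose is false. Consecutive vertices on a root-to-vertex tree path need not be created at consecutive steps. A vertex created at step $c$ receives a child at every step $c+1,\dots,k$, so labels can jump. For example, the child $a$ of $s$ created at step $4$ has label $\ceil{\frac{4}{3}}=2$, while $s$ has label $0$. The child $a'$ of $a$ created at step $5$ needs an ancestor labelled $\ceil{\frac{5}{3}}-1=1$, but its ancestors carry labels $0$ and $2$. If no shortcut is added in that case, as the literal rule says, then the step-$6$ child $a''$ of $a'$ gets none either. In $AB_6\setminus\{t\}$ its only route to $s$ is $a'',a',a,s$, so its distance to $s$ is $3>\ceil{\frac{6}{3}}$. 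Hence neither your intermediate-value induction nor the conclusion of Step 1 follows from the rule as written. The paper's one-line proof (``via the vertices with labels $i-1,\dots,1,0$'') tacitly assumes the same unbroken chain of labels.

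What rescues the argument is the intended graph, given by the recursive description in the proof of Lemma \ref{abk_planar_lemma} and drawn in Figure \ref{AB_3}. There $AB_{3m+j}$ ($1\le j\le 3$) is obtained from $AB_{3m}$ by replacing each edge $\{x,t\}$ with a copy of $AB_j$, and in $AB_j\setminus\{t\}$ every vertex is adjacent to its root $x$. Equivalently, the shortcut of a vertex created at step $i$ goes to its deepest ancestor with label at most $\ceil{\frac{i}{3}}-1$, i.e.\ created at a step at most $3(\ceil{\frac{i}{3}}-1)$; such an ancestor always exists because $s$ qualifies.

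The invariant to carry is then $dist(s,x)\le \mathrm{label}(x)$. It follows by strong induction on the creation step, since the target ancestor $a$ satisfies $dist(s,a)\le\mathrm{label}(a)\le\ceil{\frac{i}{3}}-1$. More simply, it follows by induction on $m$: each vertex added in passing from $AB_{3m}$ to $AB_{3m+j}$ is adjacent to a vertex of $AB_{3m}\setminus\{t\}$, so the distance bound grows by one per block of three steps. With this replacement for Step 1, your Step 2 completes the proof.
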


\begin{proof}
    Assume the originator has label $i$. A path towards $s$ can be informed via the vertices with labels $[{i-1}, ... , 1, 0]$, and this path has length at most $\ceil{\frac{k}{3}}$ (see Figure \ref{AB_3}). Once $s$ is informed, broadcasting in the rest of the graph can be completed in at most $k$ additional time units using Broadcast Scheme \ref{binoscheme}.
\end{proof}

\begin{theorem}
    For any $k$, $n$ such that $2^{k-1} + 1 \leq n \leq 2^{\floor{\frac{3k}{4}}-1}$, there exists a planar broadcast graph on $n$ vertices with broadcast time $k$.
\end{theorem}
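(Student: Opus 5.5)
The plan is to mimic the proof of Theorem \ref{thrm_ebk}, replacing $G_2 = B_{\floor{\frac{k}{2}}}$ by an Accelerated-Binomial graph. (I read the upper bound on $n$ as $2^{k-1} + 2^{\floor{\frac{3k}{4}}-1}$, matching the abstract.) Set $d = \floor{\frac{3k}{4}} - 1$. Let $G$ be obtained from $G_1 = B_{k-1}$ (terminals $s_1, t$) and $G_2 = AB_d$ (terminals $s_2, t$) by identifying the two $t$'s; we may also identify $s_1$ with $s_2$ as in $EB_k$, but only the merging of $t$ is essential. Then $|G| = 1 + 2^{k-1} + 2^d$, or one less if $s_1 = s_2$ is also merged. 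If needed, delete one vertex to reach exactly $2^{k-1} + 2^d$.

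Planarity follows from Lemma \ref{abk_planar_lemma}. Take a planar embedding of $AB_d$ with $t$ on the outer face, and likewise embed $B_{k-1}$, which is SP and hence planar, with $t$ on the outer face. Gluing two planar graphs at a single vertex (a $1$-sum) keeps the result planar, and so does gluing at two vertices that lie on a common face. For values $2^{k-1}+1 \le n < 2^{k-1}+2^d$, we delete vertices of $AB_d \setminus \{t\}$ in an order that preserves broadcast time. The safest order is by decreasing label, removing whole later-generation pendant-like vertices, which are vertices added at the last construction step. Deleting these gives $AB_{d'}$-like subgraphs, and subgraphs of planar graphs stay planar. I expect the intermediate counts will need some care here.

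For the broadcast scheme, follow Theorem \ref{thrm_ebk}. If $t$ is the originator, it informs a vertex $v$ and plays the role below. Otherwise the originator $v$ calls $t$ at time 1. At times $i = 2, \dots, k$, vertex $t$ informs the root $r_i$ of a $BT_{k-i}$ subtree of $G_1 \setminus \{t\}$ not containing $v$. Each $r_i$ then runs Broadcast Scheme \ref{binoscheme}, so this part finishes by time $k$. The remaining task is for $v$ to cover a set of $2^d$ vertices in the remaining $k-1$ time units. There are two cases. If $v \in AB_d \setminus \{t\}$, Lemma \ref{btimeABk} gives time $\ceil{\frac{4d}{3}} \le k-1$, since $d \le \frac{3k}{4}-1$ gives $\frac{4d}{3} \le k - \frac{4}{3}$. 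In this case $t$ covers all the $BT_{k-i}$ halves of $G_1$, which is exactly the counting of Theorem \ref{thrm_ebk}. If $v \in G_1 \setminus \{t\}$, then $t$ must also inform $s_2$ at some step, which costs one of its calls. So $t$ spends its calls on $s_2$ together with the subtrees of $G_1$ not containing the $BT_d$ around $v$. By Observation \ref{obs3}, $v$ lies in a $BT_d$ subtree of $G_1$, and $v$ informs it from time 2. By Observation \ref{obs2} this takes $2d-1$ steps, and $2d \le k$ fails for large $d$, so this is the main obstacle.

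My fix for the second case is to put shortcut edges on $G_1$ as well, making $G_1$ an $AB_{k-1}$, which is still planar. Then $v$ lies in an accelerated $BT_d$ block with $v$ at distance at most $\ceil{\frac{d}{3}}$ from its block root. The block can be completed in $\ceil{\frac{4d}{3}} \le k-1$ steps starting at time 2. Meanwhile $t$ informs $s_2$ at time 2 and then the roots of the complementary subtrees. The count then needs $2^d$ (the $AB_d$) plus the complement of a $BT_d$ inside $BT_{k-1}$ to be informed by time $k$. I would verify that the recursive structure of $AB$ (Lemma \ref{abk_planar_lemma}: an $AB_{3m+j}$ is an $AB_{3m}$ whose $t$-edges are replaced by $AB_j$'s) makes $v$'s enclosing $BT_d$ accelerated, and that the calls from $t$ fit. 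Checking this counting is where I expect the real work to lie.
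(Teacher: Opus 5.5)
Your final construction is the paper's: $G_1=AB_{k-1}$, $G_2=AB_d$ with $d=\floor{\frac{3k}{4}}-1$, and the scheme of Theorem \ref{thrm_ebk}. However, two steps fail as you state them.

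First, identifying $s_1$ with $s_2$ is essential, not optional. Suppose only $t$ is shared and the originator lies in $G_2\setminus\{t\}$. Then all $2^{k-1}$ vertices of $G_1\setminus\{t\}$ must be reached through the cut vertex $t$, which is informed at time $1$ at the earliest. So at most $2^{k-1}-1$ of them are informed by time $k$, and deleting a vertex of $G_2$ does not help. For the same reason, in your second case the calls of $t$ at times $2,\dots,k$ reach at most $2^{k-1}-1$ vertices, one fewer than the $2^d+(2^{k-1}-2^d)$ you need from them. With $s$ shared, $t$ never spends a call on $s_2$. It calls the roots of the successive halves $BT_{k-2},\dots,BT_d$ of $G_1\setminus\{t\}$ avoiding $v$ at times $2,\dots,k-d$, and the roots of the subtrees $BT_{d-1},\dots,BT_0$ of $G_2\setminus\{s,t\}$ at times $k-d+1,\dots,k$.

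Second, the step you defer is the heart of the proof, and the reason you give for it is false in general. Read the shortcut rule as in the proof of Lemma \ref{abk_planar_lemma}: a vertex created at step $i$ is joined to its nearest ancestor created at a step $\le 3\ceil{\frac{i}{3}}-3$. The block containing $v$ consists of a root $w$, created at a step $\le k-d-1$, together with its descendants created at steps $k-d,\dots,k-1$. The shortcuts, however, follow the global step number modulo $3$. So this block is a copy of $AB_d\setminus\{t\}$ rooted at $w$ when $3$ divides $k-1-d=\ceil{\frac{k}{4}}$, but not in general.

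For example, take $k=8$, so $d=5$. Let $s,c,y,x$ be a tree path whose vertices were created at steps $0,3,4,7$. Then $x$ lies in the block of $s$ (steps $3,\dots,7$). The vertices $c,y,x$ are joined to their ancestors labelled $0,1,2$, which are their parents. Hence $dist(x,s)=3>\ceil{\frac{5}{3}}$, and walking to $s$ and then running Broadcast Scheme \ref{binoscheme} ends at time $1+3+5=9>k$. The paper's proof passes over this point with the same one-line appeal to Lemma \ref{btimeABk}, so an extra argument is needed either way.

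One argument that works uses both roots of the block: $w$ and its child $c$ created at step $k-d$.
Let $b$ be the least multiple of $3$ with $b\ge k-d-1$.
Block vertices created at steps $\le b$ lie within distance $1$ of $\{w,c\}$, and they equal $w$ or $c$ unless $b=k-d+1$.
Since $w$ was created at a step $\le b$, a block vertex created at a step in $(b+3j-3,\,b+3j]$ is adjacent to a block vertex created at a step $\le b+3j-3$.
Checking the three residues of $b$, every block vertex is within distance $\ell\le\ceil{\frac{d+1}{3}}=\ceil{\frac{1}{3}\floor{\frac{3k}{4}}}\le\ceil{\frac{k}{4}}=k-1-d$ of $\{w,c\}$.
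Walk to the nearer root, let it call the other, and run Broadcast Scheme \ref{binoscheme} on both $BT_{d-1}$ halves.
This takes $\ell+d\le k-1$ rounds.

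Alternatively, build $G_1$ by replacing each edge $\{w,t\}$ of $B_{k-1-d}$ with a copy of $AB_d$; this is planar by the argument of Lemma \ref{abk_planar_lemma}. Then every block is literally $AB_d\setminus\{t\}$, and Lemma \ref{btimeABk} applies verbatim.
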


\begin{proof}
    To obtain such a graph, modify the construction of an $EB_k$ in Section \ref{SP_construction} by letting $G_1 = AB_{k-1}$ and $G_2 = AB_{\floor{\frac{3k}{4}-1}}$. The graph contains $n = 2^{k-1} + 2^{\floor{\frac{3k}{4}-1}}$ vertices with $\ceil{\log n} = k$, and is planar by being defined as the parallel composition of two planar graphs (see Lemma \ref{abk_planar_lemma}). 

    We show that the broadcast time of this graph is equal to $k$ by providing a broadcast scheme for any vertex. We use the broadcast scheme for any originator in an $EB_k$ (Theorem \ref{thrm_ebk}) with only one modification: $v$ broadcasts in its $BT_{\floor{\frac{3k}{4}-1}}$ subtree instead of its $BT_{\floor{\frac{k}{2}}}$ subtree.

    Recall that after time 1, vertices $t$ and $v$ are informed. Again, $v$ is part of some $BT_{\floor{\frac{3k}{4}-1}}$ subtree (with added shortcut edges), which is not necessarily $G_2 \setminus \{t\}$, and which $v$ begins informing at time 2. It is then sufficient to show that all vertices in this subtree can be informed in $k-1$ additional time units to complete broadcasting by time $k$. By Lemma \ref{btimeABk}, vertex $v$ can inform the $BT_{\floor{\frac{3k}{4}-1}}$ subtree it is contained in within $\left\lceil{\frac{4 (\floor{ \frac{3k}{4}} -1)  }{3}}\right\rceil \leq \left\lceil{\frac{  3k - 4  }{3}}\right\rceil = k-1$ additional time units.
    
    For $n$ with $2^{k-1} < n < 2^{k-1} + 2^{\floor{\frac{3k}{4}-1}}$ we can again remove up to $2^{\floor{\frac{3k}{4}-1}}-1$ vertices from $G_2 \setminus \{s, t\}$ starting with the leaves without increasing the broadcast time.
\end{proof}

\section{Future Work}

Having presented bounds on the broadcast times of subclasses of SP graphs and having constructed SP and planar graphs with efficient broadcast times, some questions remain to be studied.

\begin{itemize}
    \item What are other subclasses of SP graphs with bounded broadcast time?
    \item Are there other SP graphs on $n$ vertices with logarithmic broadcast time and maximum degree at most $\ceil{\log n}$?
    \item Are there SP broadcast graphs on roughly $2^{k-1} + 2^{\floor{\frac{k}{2}}}$ vertices without a universal vertex, for any $k$?
    \item What is the maximum number of vertices in an SP broadcast graph?
    \item The question in \citep{HellPaper} remains open: do planar broadcast graphs on a large number of vertices require a universal vertex?
    \item Are there planar broadcast graphs on more than $2^{k-1} + 2^{\floor{\frac{3k}{4}}-1}$ vertices, for any $k$?
\end{itemize}

\bibliographystyle{plain}
\bibliography{bibliography}

\end{document}